\newtheorem{theorem}{Theorem}
\newtheorem{lemma}[theorem]{Lemma}
\newtheorem{claim}[theorem]{Claim}
\newtheorem{corollary}[theorem]{Corollary}
\newtheorem{proposition}[theorem]{Proposition}
\theoremstyle{definition}
\newtheorem{definition}[theorem]{Definition}
\theoremstyle{remark}
\newtheorem{remark}[theorem]{Remark}
\newcommand\kp{{\mathsf{KP}}}
\newcommand\kpi{{\mathsf{KPi}}}
\newcommand\WO{{\mathsf{WO}}}
\newcommand\FO{{\mathsf{FO}}}
\newcommand\LO{{\mathsf{LO}}}
\newcommand\Nat{{\mathsf{Nat}}}
\newcommand\supp{{\text{supp}}}
\newcommand\ca{{\mathsf{CA_0}}}
\newcommand\aca{{\mathsf{ACA_0}}}
\newcommand\rca{{\mathsf{RCA_0}}}
\newcommand\id{\mathsf{id}}
\def\Ddots{\mathinner{\mkern1mu\raise\p@
\vbox{\kern7\p@\hbox{.}}\mkern2mu
\raise4\p@\hbox{.}\mkern2mu\raise7\p@\hbox{.}\mkern1mu}}
\newsavebox{\pullback}
\sbox\pullback{%
\begin{tikzpicture}%
\draw (0,0) -- (1ex,0ex);%
\draw (1ex,0ex) -- (1ex,1ex);%
\end{tikzpicture}}
\begin{document}
\title{Functorial Fast-Growing Hierarchies}
\author{J. P. Aguilera}
\address{Institute of Discrete Mathematics and Geometry, Vienna University of Technology. Wiedner Hauptstra{\ss}e 8--10, 1040 Vienna, Austria \textit{and} Department of Mathematics, University of Ghent. Krijgslaan 281-S8, B9000 Ghent, Belgium}
\email{aguilera@logic.at}
\author{F. Pakhomov}
\address{Department of Mathematics, University of Ghent. Krijgslaan 281-S8, B9000 Ghent, Belgium \textit{and} Steklov Mathematical Institute of the Russian Academy of Sciences. Ulitsa Gubkina 8, Moscow 117966, Russia.}
\email{fedor.pakhomov@ugent.be}
\author{A. Weiermann}
\address{Department of Mathematics, University of Ghent. Krijgslaan 281-S8, B9000 Ghent, Belgium.}
\email{andreas.weiermann@ugent.be}
%\begin{document}

\begin{abstract}
Fast-growing hierarchies are sequences of functions obtained through various processes similar to the ones that yield multiplication from addition, exponentiation from multiplication, etc. 
We observe that fast-growing hierarchies can be naturally extended to functors on the categories of natural numbers and of linear orders. 
We show that the categorical extensions of binary fast-growing hierarchies to ordinals are isomorphic to denotation systems given by ordinal collapsing functions,  thus establishing a connection between two fundamental concepts in Proof Theory.

Using this fact, we obtain a restatement of the subsystem  $\Pi^1_1$-$\ca$ of analysis as a higher-type wellordering principle.
\end{abstract}
\date{\today $\,$ (compiled)}
\clearpage
\subjclass[2020]{03B30, 03F15, 03F35, 18A15, 18B35}
\keywords{Fast-growing hierarchy; collapsing function; finitary functor; $\Pi^1_1$-Comprehension}
\maketitle
\numberwithin{equation}{section}
\setcounter{tocdepth}{1}
\tableofcontents

\section{Introduction}
A \emph{fast growing hierarchy} is a collection of functions $B_\alpha:\mathbb{N}\to\mathbb{N}$, where $\alpha$ ranges over some set of ordinal numbers, and each $B_\alpha$ eventually dominates $B_\beta$, for all $\beta<\alpha$. 
One is mainly interested in fast-growing hierarchies obtained through a given recursive construction which usually generalizes the process that produces multiplication as iterated addition, exponentiation as iterated multiplication, etc.

Fast-growing hierarchies can be defined in various ways, though all reasonable definitions that are known yield sequences of functions that agree at ordinals $\alpha$ which are sufficiently closed. 
Examples of these are the \emph{Hardy hierarchy} (named after G. H. Hardy), the \emph{iterative fast growing hierarchy}, and the \emph{binary fast growing hierarchy}. Let us focus on the latter one, for the sake of definiteness. 
It is defined by transfinite induction according to the following construction:
\begin{align*}
B_0(n) &= n+1&\\
B_{\alpha+1}(n) &= B_\alpha\circ B_\alpha(n)&\\
B_\lambda(n) &= B_{\lambda[n]}(n), &\text{ at limit stages,}
\end{align*}
where for each limit ordinal $\lambda$, $\{\lambda[n]:n\in\mathbb{N}\}$ is a sequence which converges to $\lambda$, fixed in advance. 
The precise functions defined depend on the sequences chosen, but for natural choices of sequences, the functions behave very regularly. The ordinal $\varepsilon_0$ is defined by:
\[\varepsilon_0 = \sup\{\omega,\omega^\omega,\omega^{\omega^\omega},\hdots\},\]
$B_{\varepsilon_0}$ is a total recursive function whose totality cannot be proved in Peano Arithmetic (or, equivalently, Arithmetical Comprehension, $\aca$).
The foundational significance of fast-growing functions was observed by Ackermann and, later, by Kreisel \cite{Kr52}.
\begin{theorem}[Kreisel 1952, essentially]\label{KreiselsTheorem}
There is a canonical wellordering $W$ of $\mathbb{N}$ of length $\varepsilon_0$ such that the following theories prove the same $\Pi^0_2$ theorems:
\begin{enumerate}
\item Arithmetical Comprehension;
\item Primitive Recursive Arithmetic + $\{B_\alpha$ is total  $:\alpha\in W\}$.
\end{enumerate}
\end{theorem}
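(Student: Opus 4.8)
The plan is to prove both directions of the $\Pi^0_2$-conservativity by relating provable totality of the $B_\alpha$ to provable transfinite induction along initial segments of $W$. The standard route, going back to Gentzen and made precise in this context by Kreisel, is to show that over Primitive Recursive Arithmetic the statement ``$B_\alpha$ is total'' is equivalent (or at least $\Pi^0_2$-equivalent) to a principle of transfinite recursion/induction up to the ordinal coded by $\alpha$ in $W$, and then to invoke the well-known ordinal analysis of $\aca$ (equivalently $\mathsf{PA}$) whose proof-theoretic ordinal is $\varepsilon_0$.

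\begin{proof}[Proof sketch]
First I would fix the canonical wellordering $W$ to be the standard notation system for ordinals below $\varepsilon_0$ in Cantor normal form, together with a primitive recursive assignment $\lambda \mapsto \lambda[\cdot]$ of fundamental sequences; this makes the recursion defining $B_\alpha$ primitive recursive in the pair $(\alpha,n)$ relative to an oracle for the graph of $B$, so that each individual totality statement ``$\forall n\,\exists m\,(B_\alpha(n)=m)$'' is genuinely $\Pi^0_2$.

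For the direction $(1)\Rightarrow(2)$, I would verify that $\aca$ proves each instance ``$B_\alpha$ is total'' by transfinite induction along $W$: the successor and limit clauses of the defining recursion show that totality of $B_\alpha$ follows from totality of all $B_\beta$ with $\beta<\alpha$ (for successors, composition; for limits, the value at $n$ reduces to $B_{\lambda[n]}$ with $\lambda[n]<\lambda$), and $\aca$ proves transfinite induction up to each $\alpha<\varepsilon_0$ along $W$. This shows that Primitive Recursive Arithmetic together with all these totality statements is interpretable in, and hence $\Pi^0_2$-below, $\aca$.

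For the converse direction $(2)\Rightarrow(1)$, the crucial point is that the totality of the hierarchy captures the full $\Pi^0_2$-strength of $\aca$. I would argue that any $\Pi^0_2$ theorem $\forall x\,\exists y\,\varphi(x,y)$ provable in $\aca$ has, by the ordinal analysis of $\mathsf{PA}$, a provably recursive Skolem function bounded by some $B_\alpha$ with $\alpha<\varepsilon_0$; concretely, cut-elimination for the infinitary derivation of $\forall x\,\exists y\,\varphi$ yields a bound on the witness $y$ as a function of $x$ that is majorized by one of the $B_\alpha$, and this bound is verifiable in Primitive Recursive Arithmetic once ``$B_\alpha$ is total'' is available as an axiom. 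Hence theory $(2)$ proves every $\Pi^0_2$ theorem of $(1)$.

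The main obstacle, and the technical heart of the argument, is the converse direction: one must extract from the ordinal analysis of $\aca$ the precise fact that the provably recursive functions of $\mathsf{PA}$ are exactly those majorized by the $B_\alpha$ for $\alpha<\varepsilon_0$. This is the content of the classical Gentzen--Kreisel analysis and rests on cut-elimination for infinitary proofs together with a careful bounding of the collapse of the cut-elimination ordinal; the delicate part is ensuring that the majorization bounds are uniform and primitive recursive in the proof, so that they can be certified in Primitive Recursive Arithmetic augmented only by the totality axioms rather than by full transfinite induction.
\end{proof}
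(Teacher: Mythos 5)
The paper does not prove this statement at all: it is quoted as a classical result of Kreisel \cite{Kr52} (with the modern norm-free formulation going back to the Gentzen-style ordinal analysis of $\mathsf{PA}$), so there is no in-paper proof to compare against. Judged on its own terms, your sketch follows the standard route and is correct in outline. The easy direction is fine: for each fixed $\alpha<\varepsilon_0$, $\mathsf{PA}$ (hence $\aca$, which the authors identify with $\mathsf{ACA}_0$/$\mathsf{PA}$ here --- note that full $\mathsf{ACA}$ with unrestricted induction would have a larger ordinal) proves transfinite induction along $W{\upharpoonright}\alpha$ for $\Pi^0_2$ formulas, and the successor/limit clauses make ``$B_\beta$ is total'' progressive along $W$, so all axioms of theory (2) are theorems of theory (1). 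Your converse direction is the entire content of the theorem, and you have (reasonably, for a sketch) reduced it to the classical extraction: every $\Pi^0_2$ theorem of $\mathsf{PA}$ has a witnessing function majorized by some $B_\alpha$, $\alpha<\varepsilon_0$, via cut elimination for the $\omega$-rule, with the whole extraction formalizable in $\mathsf{PRA}$ once the totality axioms are available. Two small points you should make explicit if you were to write this out: (i) the hierarchy in the theorem depends on the chosen fundamental sequences, so ``canonical $W$'' must be taken together with the standard assignment $\lambda\mapsto\lambda[\cdot]$ on Cantor normal forms, as you do; and (ii) in the hard direction one needs not merely a majorization of the Skolem function but a $\mathsf{PRA}$-verifiable proof of the $\Pi^0_2$ sentence itself from ``$B_\alpha$ is total,'' which is where primitive recursive codes for infinitary proof trees (Buchholz--Wainer style) enter. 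Neither point is a gap in the intended argument, only in the level of detail.
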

Kreisel's theorem is a computational analysis of arithmetic, in the sense that it gives an explicit characterization of the recursive functions which are provably total.

%The totality of a fixed recursive function can itself be expressed as a $\Pi^0_2$ sentence. Thus, the assertion that a function such as $B_{\varepsilon_0}$ is total strengthens $\aca$  by adding computational strength, but it does not imply the existence of any  new infinite objects.  

Kreisel's result is of great foundational significance and has led to a great deal of metamathematical and combinatorial results in the context of arithmetic, as well as generalizations to stronger systems. For instance, one can prove a version of Kreisel's theorem for the subsystem $\Pi^1_1$-$\ca$ of analysis where $B_{\varepsilon_0}$ is replaced by $B_{\psi_0(\aleph_\omega)}$ (see Buchholz \cite{Bu87}).
Here, $\psi_0(\aleph_\omega)$ is the \emph{Takeuti ordinal}, a recursive ordinal that is most easily described by \emph{ordinal collapsing functions} for the uncountable cardinals $\aleph_n$, where $n\in\mathbb{N}$. 
$\Pi^1_1$-$\ca$ is a historically and mathematically important theory, and  is the strongest of the ``big five'' subsystems of Second-Order Arithmetic commonly studied in Reverse Mathematics. Ordinal collapsing functions were introduced by Bachmann \cite{Ba50} and are a powerful method in proof theory involved in the description of large recursive ordinals, such as the \emph{Bachmann-Howard} ordinal $\psi(\varepsilon_{\Omega+1})$, the Takeuti ordinal, and other larger numbers such as the proof-theoretic ordinals of $\kpi$ and $\kp$ + $\Pi_3$-reflection. These ordinals involve collapsing functions for inaccessible and weakly compact cardinals, respectively; see Rathjen \cite{Ra99} for an overview.

The first observation we make is that fast-growing hierarchies can be regarded as functors on the category of natural numbers with strictly increasing functions as morphisms. This fact was perhaps underpinning the main result of \cite{AFRW}. Let us say more about this observation.

For our purposes, it will be convenient to shift our focus to the norm-based presentation of fast-growing hierarchies.
This presentation goes back to \cite{BCW94} and, for our purposes, is described as follows: suppose that we are given a denotation system $D$ for ordinals.
The definition of ``denotation system'' is recalled in the next section, where these are defined as  a particular kind of \emph{dilator.} The reader unfamiliar with denotation systems might temporarily  think of $D$ as a system for representing ordinals in some way using natural numbers as parameters. For example, we may think of representing ordinals ${<}\omega^\omega$ as sums of powers of $\omega$; e.g.,
\[\omega^7 + \omega^3+ 21.\]
The \emph{norm} $N\alpha$ of an ordinal $\alpha$ is the strict supremum of the natural number parameters which occur in the denotation.
We define
\[B_\alpha(n) = \sup\{B_\beta\circ B_\beta(n): \beta<\alpha\wedge N\beta \leq n\} \cup \{n+1\}.\]

Natural numbers can be represented by repeated applications of functions in the hierarchy:
\begin{equation} \label{eqIntroNormalForm}
m = B_{\alpha_k} B_{\alpha_{k-1}}\dots B_{\alpha_1} (n).
\end{equation}
If $n$ is fixed and some constraints are imposed on the values $n_1, \hdots, n_k$, then the resulting representation is unique. 
We define
\[B_D(n) = B_{D(\omega)}(n) = \sup\{B_\beta\circ B_\beta(n): N\beta \leq n\} \cup \{n+1\}.\]
Making use of the representation of natural numbers in terms of iterations of the fast-growing hierarchy, we can regard $B_D$ as a functor on the category of natural numbers with strictly increasing functions as morphisms. This is proved in \S \ref{SectFunctor}.

 From the fact that $B_D$ is a functor on the category of natural numbers, it follows that $B_D$ can be uniquely extended to a finitary functor on the category of linear orders. In particular, expressions of the form \eqref{eqIntroNormalForm} can be given transfinite parameters and construed as denotations for elements of some linear order. The main insight in \S \ref{SectCollapse} is that (up to isomorphism) these denotations are obtained from ordinal collapsing functions. Thus, we have an object $B_D$ which can be simultaneously viewed from the point of view of two fundamental concepts in Proof Theory: fast-growing functions and ordinal collapses.

In \S \ref{SectTheorem}, we prove the main theorem. We first show that the ordinal collapses given by $B_D$ are wellfoundedness-preserving, so that if $A$ is a wellorder, then so too is $B_D(A)$. In fact, $B_D$ is a functor on the category of ordinal numbers, and indeed a \emph{dilator}. The proof of this fact requires the use of $\Pi^1_1$-$\ca$, and this is unavoidable: the fact that $B_D$ is wellfoundedness-preserving in turn implies $\Pi^1_1$-$\ca$. This way we obtain a restatement of $\Pi^1_1$-$\ca$ in terms of fast-growing hierarchies and abstract ordinal collapses. We state the theorem; for all undefined terms, we refer the reader to the following section.

\begin{theorem}\label{MainIntro}
The following are equivalent over $\aca$:
\begin{enumerate}
\item  $\Pi^1_1$-$\ca$;
\item for every weakly finite dilator $D$, $B_D$ is a weakly finite dilator;
\item  for every dilator $D$, $B_D$ is a dilator.
\end{enumerate}
\end{theorem}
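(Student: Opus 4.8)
The plan is to prove the cycle $(1)\Rightarrow(3)\Rightarrow(2)\Rightarrow(1)$ over $\aca$, using throughout the identification established in \S\ref{SectCollapse}: for a (pre)dilator $D$, the functor $B_D$ applied to a linear order $A$ is, up to canonical isomorphism, an abstract ordinal collapse of $D$ (a Bachmann--Howard-style fixed point). Under this identification the only nontrivial component of the assertion ``$B_D$ is a dilator'' is wellfoundedness preservation, since by \S\ref{SectFunctor} the functor $B_D$ is already a finitary functor on linear orders, i.e.\ a predilator; so ``$B_D$ is a dilator'' amounts to ``the collapse of $D$ is wellfounded'', and the theorem becomes a wellordering-principle characterization of $\Pi^1_1$-$\ca$.

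For $(1)\Rightarrow(3)$, assume $\Pi^1_1$-$\ca$ and fix a dilator $D$; it suffices to show that $B_D(A)$ is wellordered whenever $A$ is. The route I would take is to interpret, using the comprehension available, the denotations constituting $B_D(A)$ by elements of a genuinely wellfounded structure built from $A$ and $D$, in an order-preserving way: $\Pi^1_1$-$\ca$ supplies, as a set, the collapsing hierarchy that carries out this interpretation, and wellfoundedness of $B_D(A)$ is then inherited from that of the structure together with the wellfoundedness of $A$ and the dilator property of $D$. Equivalently, one argues by contradiction: an infinite descending sequence through $B_D(A)$ would, after decoding through the collapse, yield an infinite descent in a provably wellfounded object, and the set needed to perform that decoding is exactly what $\Pi^1_1$-comprehension produces. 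This is the step that carries the genuine proof-theoretic strength.

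The step $(3)\Rightarrow(2)$ is the routine one. A weakly finite dilator is in particular a dilator, so $(3)$ already yields that $B_D$ is a dilator for every weakly finite $D$; it then remains only to check that $B_D$ inherits weak finiteness from $D$. This is a structural feature of the construction $D\mapsto B_D$ and is verified arithmetically, so the implication holds already over $\aca$ without further comprehension.

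The reversal $(2)\Rightarrow(1)$ is where I expect the main obstacle, and here the universal quantifier over weakly finite dilators does the decisive work, in the spirit of treating $\Pi^1_1$-comprehension as a wellordering principle. Given an instance of $\Pi^1_1$-$\ca$ --- a set to be formed from a sequence of trees, or from a $\Pi^1_1$ formula with parameters --- the plan is to manufacture a weakly finite dilator $D$ tailored to that instance so that, via the collapse identification of \S\ref{SectCollapse}, the wellfoundedness of $B_D(A)$ for a suitable wellorder $A$ encodes exactly the separation of the wellfounded from the ill-founded needed to define the desired set. Hypothesis $(2)$ applied to this $D$ then delivers the wellorder, and hence the set, completing the comprehension. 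The delicate points are constructing $D$ \emph{within} the weakly finite class, so that only $(2)$ and not the stronger $(3)$ is invoked, and verifying that the abstract collapse produced by $B_D$ is faithful enough to the intended $\Pi^1_1$ content for the separation to be recovered; getting this encoding right is the part of the argument I would budget the most effort for.
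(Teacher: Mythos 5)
Your high-level architecture (a cycle of implications, with $\Pi^1_1$-$\ca$ used only for wellfoundedness preservation and the reversal treated as a wellordering principle) matches the paper's, and your step $(3)\Rightarrow(2)$ is indeed the routine one. But the two load-bearing implications are left as statements of intent rather than arguments, and in both cases the specific idea that makes the proof work is missing. For $(1)\Rightarrow(3)$ you propose to ``decode'' a descending sequence in $B_D(A)$ into ``a provably wellfounded object,'' but no such object is available in advance: the wellfoundedness of the collapse is exactly what is at stake. The paper's actual argument is concrete and different in shape: use $\Pi^1_1$-$\ca$ to form $\Omega$, the \emph{largest wellordered initial segment} of $B_D(A)$; observe that $D(\Omega)$ is then wellordered because $D$ is a dilator; and prove by transfinite induction on $\alpha\in D(\Omega)$ that every term $\psi(2^{\alpha_1}+\dots+2^{\alpha_k}+2^{\alpha})$ lying above an element of $\Omega$ is itself in $\Omega$, concluding $\Omega=B_D(A)$. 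Without identifying $\Omega$ (or an equivalent device) your sketch does not close.

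The gap in $(2)\Rightarrow(1)$ is more serious. You propose to encode an arbitrary instance of $\Pi^1_1$-comprehension directly into a weakly finite dilator, and you yourself flag this encoding as the hard part; as stated it is a research program, essentially asking you to reprove Girard's or Freund's characterization from scratch. The paper instead factors the reversal through two specific constructions. First, $(2)\Rightarrow(3)$: given an arbitrary denotation system $D$ with terms $d_i$ of arity $n_i$, pad to $\hat d_i$ of arity $n_i+i$ so that $\hat D$ is weakly finite, and embed $B_D(A)$ into $B_{\hat D}(\omega+A)$. Second, $(3)\Rightarrow(1)$ via Freund's theorem \cite{Fr19} that $\Pi^1_1$-$\ca$ is equivalent to every dilator having a wellfounded Bachmann--Howard fixed point: one forms the auxiliary dilator $F=(\omega+1)D+\omega$, sets $A=B_F(0)$, and defines the collapse $\theta(\alpha)$ as the least element of $A$ of the form $\psi(2^{\alpha_1}+\dots+2^{\alpha_n}+2^{(\omega+1)\alpha+\omega})$, using the cofinal terms $\psi(2^{\Lambda+n})$ and the $+m$ slots to verify the Bachmann--Howard conditions. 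Neither the weakly-finite reduction nor the appeal to a known higher-type characterization of $\Pi^1_1$-$\ca$ appears in your plan, and without one of them the reversal does not go through.
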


Theorem \ref{MainIntro} fits within the family of results known as \emph{wellordering principles}. Most commonly, these are principles of the form ``if $X$ is a wellorder, then $f(X)$ is a wellorder,'' where $f$ is some transformation. 
These principles have been extensively studied in Reverse Mathematics and have led to reformulations of several subsystems of analysis. Some examples are the works of Girard \cite{Gi87},  Friedman (unpublished), Hirst \cite{Hi94}, Afshari-Rathjen \cite{AfRa09}
, Marcone-Montalb\'an \cite{MaMo11}, Rathjen-Valencia Vizca\'ino \cite{RaVV15}, Thomson-Rathjen \cite{ThRa20}, and \cite{RaWe11}. Higher-type wellordering principles were used by Girard \cite{GiInf} and Freund \cite{Fr19} to characterize $\Pi^1_1$-$\ca$. Freund's theorem is an ingredient of our proof of Theorem \ref{MainIntro}. Freund's principle maps dilators to wellorderings, while ours and Girard's map dilators to dilators. Girard's characterization of  $\Pi^1_1$-$\ca$ asserts the totality of the functor $\Lambda$, which itself is reminiscent of fundamental-sequence--based fast-growing hierarchies. Wellordering principles for theories such as $\Pi^1_1$-$\ca$ which are not $\Pi^1_2$-axiomatizable require the use of higher-type objects such as dilators. 

Although throughout this article we focus on the binary fast-growing hierarchy $B_D$, we mention that the statement of Theorem \ref{MainIntro} applies to many of the other usual fast-growing hierarchies as well. The use of $B_D$ is merely for convenience and to simplify the computations involved.

\subsection*{Acknowledgements}
J.P.A. was partially supported by FWF grants I4513N and ESP-3N and FWO grant 0E3017319; F.P. was supported by FWO grant G0F8421N; A.W. was partially supported by FWO grants G0E2121N and G030620N.

\section{Preliminaries}
We deal with the category $\LO$. The objects of $\LO$ are linear orders whose domains are subsets of $\mathbb{N}$. The morphisms are strictly increasing maps between this orders. 
%As usual both the orders and the maps are represented by individual object. 
We shall work within the subsystem $\aca$ of second-order arithmetic. This is the system in the language of second-order arithmetic (which contains sorts for natural numbers and sets of natural numbers) whose principal axioms are the induction axiom and the schema asserting that every arithmetically definable set exists.
When it leads to no confusion, we shall leave the precise formalization of statements to the reader and work informally. 

In the language of second-order arithmetic one cannot generally speak of functors from $\LO$ to $\LO$; thus, we shall restrict to dealing with \emph{finitary} functors, i.e., functors preserving co-limits of upward directed diagrams (direct limits in the model-theoretic sense). These can be coded by sets of natural numbers: 

We consider $\Nat$, the full subcategory of $\LO$ where objects are natural numbers $n$, which we identify with the  orders $([0,n),<)$. 
Since both $\mathsf{Ob}_\Nat$ and $\mathsf{Mor}_\Nat$ are countable, each functor from $\Nat$ to $\LO$ can be coded by a set of natural numbers. 
Each functor $D\colon \Nat\to \LO$ can be uniquely extended to a finitary functor $D\colon \LO\to \LO$ as follows: 

Below, we write $A'\subseteq_{\mathsf{fin}} A$ if $A'$ is a finite subset (or substructure) of $A$.
Given a linear order $A$, we consider an upward directed diagram  $H_A$ consisting of all finite suborders $A'\subseteq_{\mathsf{fin}}A$ and all morphisms $\id_{A'\to A''}$, for $A'\subseteq_{\mathsf{fin}}A''\subseteq_{\mathsf{fin}}A$. 
Let $H'$ be the naturally isomorphic diagram in which all objects are orders in $\mathsf{Ob}_\Nat$. 
Applying $D$ to all morphisms and objects in $H'_A$, we obtain a diagram $D[H'_A]$ and define 
\[D(A) = \varinjlim D[H'],\]
where 
$\injlim D[H']$ denotes the co-limit of the diagram.
The value $D(f)$, for an $\LO$-morphism $f\colon A\to C$, is recovered in the natural way.

A coded finitary functor $D$ on $\LO$ is called a pre-dilator if it preserves pullbacks. Clearly, $D$ preserves pullbacks if and only if its restriction to $\Nat$ preserves pullbacks.

It will be convenient for our purposes to work with inclusion-preserving (or \emph{$\subseteq$-preserving}) functors. These are the functors $D$ such that for any order $A$ and any suborder $B$ of $A$,
\begin{enumerate}
\item $D(B)$ is a suborder of $D(A)$, and furthermore \item $D(\id_{B\to A})=\mathsf{id}_{D(B)\to D(A)}$, where $\id_{B\to A}\colon B\to A$, denotes the inclusion map given by $\id_{B\to A}(x)=x$.
\end{enumerate} 
Finitary $\subseteq$-preserving functors were called $\omega$-local functors by S.~Feferman \cite{Fe72}.\footnote{Feferman \cite{Fe72} is an early reference for functorial ordinal notation systems.}
For $\subseteq$-preserving functors there are convenient reformulations of the conditions of being a finitary functor and a pre-dilator. Namely, a $\subseteq$-preserving functor $D$ is finitary if and only if for any order $A$, we have 
\[D(A)=\bigcup\limits_{A'\subseteq_{\mathsf{fin}}A}D(A').\] 
An  $\subseteq$-preserving finitary functor $D$ is a pre-dilator iff for any orer $A$ and its suborders $B,C\subseteq A$ we have $D(B\cap C)=D(B)\cap D(C)$.

Working in $\aca$, we code $\subseteq$-preserving finitary functors as follows. 
We consider the full subcategory $\FO$ of $\LO$, where objects are finite linear orders. 
As in the case of $\Nat$, both  $\mathsf{Ob}_\FO$ and $\mathsf{Mor}_\FO$ are countable, so $\subseteq$-preserving functors $D$ from $\FO$ to $\LO$ can be naturally each by a set of natural numbers. 
We extend any such $D$ to a functor with domain $\LO$ by putting 
$$D(A)=\bigcup\limits_{A'\subseteq_{\mathsf{fin}}A}D(A')\;\;\;\text{ and }\;\;\;D(f)=\bigcup\limits_{A'\subseteq_{\mathsf{fin}}A}D(f\upharpoonright A').$$ 
Here for $f\colon A\to C$ and $A'\subseteq A$, $f\upharpoonright A'\colon A'\to C$ is the  restriction of $f$.

A pre-dilator is called a dilator if it maps well-orders to well-orders. Dilators were introduced by Girard \cite{Gi81}, to whom we refer the reader for further background. The fundamental theorem of dilators states that every pre-dilator $D$ is naturally isomorphic to a \emph{denotation system}, a special kind of $\subseteq$-preserving predilators.

Each denotation system $D$ consists of a set of terms $t(x_1, \hdots, x_n)$ and comparison rules establishing, for each pair of terms $t(\vec x)$ and $s(\vec y)$, which one is greater, depending on the relative ordering of the constants $\vec x$ and $\vec y$. Note that we do not allow $t(\vec x) = s(\vec y)$ unless $t(\vec x)$ and $s(\vec y)$ are syntactically the same term.
This allows us to define a binary relation $D(A)$ for each linear order $A$ as follows: the domain of $D(A)$ is the set of all expressions $t(a_1,\ldots,a_n)$, where $a_1>\ldots>a_n$ and $t(x_1,\ldots,x_n)$ is a $D$-term. The relative ordering of elements of the domain of $D(A)$ is determined by the comparison rules for each pair of terms.

Given a strictly increasing map $f\colon A\to C$ between finite orderings $A$ and $C$, we define 
\begin{align*}
D(f)\colon D(A) &\to D(C)\\
D(f)\colon t(a_1,\ldots,a_n)&\mapsto t(f(a_1),\ldots,f(a_n))
\end{align*}

We shall sometimes identify the values of terms $t(x_1, \hdots, x_n)$ with the ordinals they denote.
If $\alpha =t(x_1, \hdots, x_n)$, we call the set $\{x_1, \hdots, x_n\}$ the \emph{support} of $\alpha$ and denote it by $\supp(\alpha)$. Given ordinals $\alpha,\beta$, we write
\[\supp(\alpha) < \beta \text{ if $x_i<\beta$ for all $x_i \in \supp(\alpha)$}.\]

%Here, $D(\alpha)$ corresponds to the order type of all terms representable by a set of constants $x_i$ wellordered in order type $\alpha$. %Finitary functors on $\WO$ also correspond to denotation systems for ordinal numbers, the difference being that for terms $t(\vec x)$ and $s(\vec y)$, the comparison rules allow setting $t(\vec x) = s(\vec y)$, i.e., denotations are not unique.
%Note that denotation systems are automatically $\subseteq$-preserving.

%We will often refer to terms $t(x_1, \hdots, x_n)$ as \emph{ordinals}. 
%We will mainly work with dilators as denotation systems for ordinal numbers, as this will be more convenient for our purposes. Note that for $\alpha\in D(\omega)$, $N\alpha$ is the least $k$ such that $\alpha$ is the value of a term $t(x_1, \hdots, x_n)$ with $x_1, \hdots, x_n  < k$.

%The notion of support also could be naturally extended to $\subseteq$-normal finitary functors $F\colon\LO\to\LO$. For $\alpha\in F(A)$ we put 
%$$\supp(\alpha)

\subsection{Conventions} Whenever we deal with a linear order $L$, we will write $<_L$ to indicate the ordering. If no confusion arises, we may occasionally omit reference to $L$ and simply write $<$. %Similarly, if $L$ is of the form $D(A)$ for some functor $D$ and some linear order $A$, we may also write $<_D$.

When dealing with functions $f(x)$, we will sometimes omit brackets and simply write $fx$. The purpose of this is to avoid cluttering when dealing with nested application of functions.

\section{The functoriality of fast-growing hierarchies}
\label{SectFunctor}
In this section, we  fix a dilator $D$ and investigate the functorial structure of $B_D$. All the constructions in this section are formalizable in $\aca$. Note that the norms $N\alpha$ -- and hence $B$-hierarchy on $D(\omega)$ -- are unaffected if we replace $D$ by a naturally isomorphic denotation system. Hence, in order to simplify notation, we will assume that $D$ is a denotation system (in fact we will only use that it is $\subseteq$-preserving). 
If so, $N \alpha= \min \{n\mid \alpha \in D(n)\}$. %The fact that we just consider $\subseteq$-preserving dilators isn't a limitation since any dilator $D$ is naturally isomorphic to a $\subseteq$-preserving one.

\begin{definition}
Let $n$ be a natural number and $\alpha\in D(\omega)\cup \{D(\omega)\}$. An \emph{$(n,\alpha)$-normal form term} is an expression of one of the following forms:
\begin{enumerate}
\item $m$, where $m<n$ is a natural number; or
\item $B_{\alpha_k}\dots B_{\alpha_1}(n)$, where 
\begin{enumerate}
\item $\alpha_k <_{D(\omega)} \dots <_{D(\omega)} \alpha_1<\alpha$; and
\item $N\alpha_{i+1} \leq B_{\alpha_i}\dots B_{\alpha_1}(n)$ for all $i$ with $0\leq i<k$.
\end{enumerate}
\end{enumerate}
\end{definition}

Normal form terms denote natural numbers, obtained simply by evaluating the functions. If $m = B_{\alpha_k}\dots B_{\alpha_1}(n)$ is as above, we may omit reference to $\alpha$ and call this expression the \emph{$n$-normal form} of $m$; if so, we may write 
\[m \stackrel{NF}{=} B_{\alpha_{k}}\dots B_{\alpha_1}(n).\]

For finite sequences of ordinals $(\alpha_1,\ldots,\alpha_k)\in (D(\omega))^{<\omega}$ the lexicographical comparision $<_{\mathsf{lex}}$ is defined in the usual manner: by putting 
\[(\alpha_1,\ldots,\alpha_k)<_{\mathsf{lex}} (\beta_1,\ldots,\beta_l)\] 
if there exists $0\le n\le \max(k,l)$ such that $\alpha_i=\beta_i$, for $1\le i\le n$ and either $n=k<l$ or $n<\max(k,l)$ and $\alpha_{n+1}<_{D(\omega)}\beta_{n+1}$.

\begin{lemma}[$\aca$]\label{normal_forms} 
Suppose $\alpha\in D(\omega)\cup \{D(\omega)\}$ and $n\in\mathbb{N}$. Then, 
\begin{enumerate}
\item each number smaller than $B_{\alpha}(n)$ is the value of a unique $(n,\alpha)$-normal form; and
\item  for any two $(n,\alpha)$-normal forms $B_{\alpha_1}\ldots B_{\alpha_k} (n)$ and $B_{\beta_l}\ldots B_{\beta_1} (n)$, we have
  $$B_{\alpha_1}\ldots B_{\alpha_k} (n)< B_{\beta_l}\ldots B_{\beta_1} (n)\iff (\alpha_1,\ldots,\alpha_k)<_{\mathsf{lex}}(\beta_1,\ldots,\beta_l).$$
\end{enumerate}
\end{lemma}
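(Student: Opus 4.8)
The plan is to prove both parts simultaneously by transfinite induction on $\alpha$ along $D(\omega)\cup\{D(\omega)\}$, which is a well-order because $D$ is a dilator and $\omega$ is well-ordered; the statement to be inducted (``for all $n$, parts (1) and (2) hold'') is arithmetical in the function $B$, so the induction is legitimate in $\aca$ (if it failed, arithmetical comprehension would produce a nonempty set of counterexamples, which would then have a least element). Before starting I would record the facts that drive everything: each $B_\alpha$ is strictly increasing and satisfies $B_\alpha(n)>n$; and, writing $\beta^\ast=\beta^\ast(\alpha,n)$ for the largest $\beta<_{D(\omega)}\alpha$ with $N\beta\le n$ when one exists, the map $\beta\mapsto B_\beta(B_\beta(n))$ is strictly increasing on $\{\beta<\alpha:N\beta\le n\}$, so the defining supremum is attained and $B_\alpha(n)=B_{\beta^\ast}(B_{\beta^\ast}(n))$. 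The attainment is exactly where the ambient weak finiteness of $D$ is needed: if $\{\beta<\alpha:N\beta\le n\}$ were infinite the supremum need not be a natural number at all. I would isolate these as short preliminary claims so the main induction can quote them freely.

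For part (1) in the inductive step I fix $n,\alpha$ and split on whether $\{\beta<\alpha:N\beta\le n\}$ is empty. If it is, then $B_\alpha(n)=n+1$, there are no nontrivial towers, and the $(n,\alpha)$-normal forms are precisely $0,\dots,n-1$ together with the empty tower $(n)$, which enumerate $[0,n+1)$ without repetition. Otherwise set $m_1=B_{\beta^\ast}(n)$, so $B_\alpha(n)=B_{\beta^\ast}(m_1)$, and classify each tower normal form by its innermost (largest) ordinal $\gamma\le\beta^\ast$: if $\gamma<\beta^\ast$ the tower is literally an $(n,\beta^\ast)$-normal form, while if $\gamma=\beta^\ast$ then peeling off the innermost application $B_{\beta^\ast}(n)=m_1$ exhibits the remaining tower as an $(m_1,\beta^\ast)$-normal form over the shifted base $m_1$ — the norm conditions transfer verbatim because the inequality $N\alpha_{i+1}\le B_{\alpha_i}\cdots B_{\alpha_1}(n)$ is unchanged by this rewriting. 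Applying the induction hypothesis (part (1)) to $\beta^\ast$ at bases $n$ and $m_1$, the case-$1$ forms fill $[0,n)$, the empty tower contributes $n$, the towers with innermost ordinal $<\beta^\ast$ fill $(n,m_1)$, and the towers with innermost ordinal $\beta^\ast$ fill $[m_1,B_\alpha(n))$. These four pieces partition $[0,B_\alpha(n))$, giving existence; uniqueness follows since the pieces are disjoint and the induction hypothesis supplies uniqueness within each.

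For part (2) I compare the innermost ordinals $\alpha_1,\beta_1$ of the two towers. The key inequality is that if $\alpha_1<\beta_1$, then the first tower, all of whose ordinals are $\le\alpha_1<\beta_1$, is an $(n,\beta_1)$-normal form and so has value $<B_{\beta_1}(n)$ by part (1), whereas the second tower has value $\ge B_{\beta_1}(n)$ because its outer applications only increase the innermost value $B_{\beta_1}(n)$; thus a strictly larger innermost ordinal forces a strictly larger value, matching the leading coordinate of the lexicographic comparison. When $\alpha_1=\beta_1=\gamma$ the innermost applications agree and equal $m_1=B_\gamma(n)$, so comparing the full values is the same as comparing the two tails over base $m_1$, which are $(m_1,\gamma)$-normal forms with $\gamma<\alpha$; the induction hypothesis (part (2)) for $\gamma$ relates their value comparison to $<_{\mathsf{lex}}$ of the tails, and $<_{\mathsf{lex}}$ of the full sequences with equal first coordinate is exactly $<_{\mathsf{lex}}$ of the tails. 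Trichotomy then upgrades these implications to the stated equivalence, the degenerate case of one sequence being a proper prefix of the other (an empty tail against a nonempty tail) being subsumed by the same reduction.

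The step I expect to be the main obstacle is the bookkeeping in the inductive step for part (1): checking that the four ranges $[0,n)$, $\{n\}$, $(n,m_1)$, $[m_1,B_\alpha(n))$ partition $[0,B_\alpha(n))$ exactly, and that the norm conditions translate correctly when a tower is re-read as a normal form at the shifted base $m_1$. Getting the edge cases right — the empty tower contributing the single value $n$, the attainment of the defining supremum together with the role of weak finiteness, and a sequence being a proper prefix of another — is where the care lies; the remainder is a routine unwinding of the definitions once the preliminary monotonicity facts are in place.
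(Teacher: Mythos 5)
Your proof is correct and follows essentially the same route as the paper's: transfinite induction on $\alpha$, with the inductive step splitting $[0,B_\alpha(n))$ at $s=B_{\beta^\ast}(n)$ (your $\beta^\ast$ is the paper's $\alpha'$, a witness where the defining supremum is attained), reading towers with innermost ordinal $\beta^\ast$ as normal forms over the shifted base $s$, and settling the comparison by the threshold $s$ together with the induction hypothesis. One small caution: where you invoke ``part (1)'' to conclude that an $(n,\beta_1)$-normal form has value $<B_{\beta_1}(n)$, the literal statement of (1) only gives surjectivity onto $[0,B_{\beta_1}(n))$, so you should carry the strengthened induction hypothesis that your own partition argument (and the paper's) actually proves, namely that the values of $(n,\alpha)$-normal forms are \emph{exactly} the numbers below $B_\alpha(n)$.
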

\begin{proof}
  We prove the lemma by transfinite induction on $\alpha$. If there are no $\alpha'<_{D(\omega)}\alpha$ with $N\alpha'\le n$, then $B_\alpha(n)=n+1$ and the only $(n,\alpha)$-normal forms are constants $\le n$ and hence the lemma holds.

Otherwise, choose $\alpha'<_{D(\omega)}\alpha$ such that $N\alpha'\leq n$ and $B_\alpha(n)=B_{\alpha'}(B_{\alpha'}(n))$. Let $s=B_{\alpha'}(n)$. By induction hypothesis,
  \begin{enumerate}
  \item any number $<B_{\alpha'}(n)$ is the value of a unique $(s,\alpha')$-normal form;
  \item any number $<s$ is the value of a unique $(n,\alpha')$-normal form.
  \end{enumerate}
Combining these two facts we observe that each number $<B_{\alpha}(n)$ is the value of the unique term of one of the following three forms:
  \begin{enumerate}
  \item $m$, for $m<n$;
  \item \label{nf_2} $B_{\alpha_k}\dots B_{\alpha_1}(n)$, where $\alpha_k <_{D(\omega)} \dots <_{D(\omega)} \alpha_1<_{D(\omega)}\alpha'$ and $N\alpha_{i+1} \leq B_{\alpha_i}\dots B_{\alpha_1}(n)$ for all $i$ with $0\leq i<k$;
    \item \label{nf_3} $B_{\alpha_k}\dots B_{\alpha_1}(B_{\alpha'}(n))$, where  $\alpha_k <_{D(\omega)} \dots <_{D(\omega)} \alpha_1<_{D(\omega)}\alpha'$ and $N\alpha_{i+1} \leq B_{\alpha_i}\dots B_{\alpha_1}(B_{\alpha'}(n))$ for all $i$ with $0\leq i<k$.
    \end{enumerate}
Since $\alpha'<_{D(\omega)}\alpha$ and $N\alpha' \leq n$, it follows that any number $<B_\alpha(n)$ is indeed the value of an $(n,\alpha)$-normal form. To prove that $(n,\alpha)$-normal forms are compared as prescribed, we simply need to consider the cases of the forms \eqref{nf_2} and \eqref{nf_3}. 
If both normal forms are of the same form, then we get the comparison property directly by induction hypothesis.
The fact that the comparison property holds for terms of different forms follows from the fact that any term of the form \eqref{nf_2} has smaller value than any term of  the form \eqref{nf_3}. This is because, by construction, every term of the form \eqref{nf_2} has value $<s$ and every term of the form \eqref{nf_3} has value $\ge s$.\end{proof}

\begin{definition}\label{DefinitionBDf}
Let $f:n \to n'$ be a strictly increasing function. We define the function $B_D(f):B_D(n) \to B_D(n')$ to be the unique strictly increasing function such that
\begin{enumerate}
\item\label{funct_def_1} if $m<n$, then $B_D(f)(m) = f(m)$;
\item\label{funct_def_2} if $n\le m<B_D(n)$ and $m \stackrel{NF}{=} B_{\alpha_{k}}\dots B_{\alpha_1}(n)$, then
\[B_D(f)(m)=B_{\alpha'_{k}}\dots B_{\alpha'_1}(n'),\]
where $\alpha'_i=D(B_D(f))(\alpha_i)$ for each $i$.
\end{enumerate}
\end{definition}

\begin{lemma}[$\aca$]
Let $f:n \to n'$ be a strictly increasing function. Then, there is a unique strictly increasing function satisfying Definition \ref{DefinitionBDf}.
\end{lemma}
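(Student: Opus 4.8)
The plan is to exploit the fact that the apparent circularity in Definition~\ref{DefinitionBDf} is illusory: the value of $g := B_D(f)$ at an argument $m$ depends only on the restriction $g\upharpoonright m$ of $g$ to arguments below $m$. Indeed, if $n \le m < B_D(n)$ and $m \stackrel{NF}{=} B_{\alpha_k}\dots B_{\alpha_1}(n)$, then for each $i$ the support of $\alpha_i$ consists of natural numbers $< N\alpha_i$, and the normal-form conditions give $N\alpha_i \le B_{\alpha_{i-1}}\dots B_{\alpha_1}(n) < m$ (the last inequality because each $B_\beta$ is strictly increasing, and reading the empty composition as $n$). Since $D$ is $\subseteq$-preserving, $D(g)(\alpha_i)$ is computed locally as the term $\alpha_i$ with each parameter $a$ replaced by $g(a)$, so it depends only on the values of $g$ on $\supp(\alpha_i)\subseteq\{0,\dots,m-1\}$. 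As $B_D(n)$ is a genuine natural number (the order $D(n)$ is finite, so the defining supremum is finite), I would therefore define $g$ by a finite course-of-values recursion on $m < B_D(n)$; this is unambiguous because the $n$-normal form of each $m$ is unique by Lemma~\ref{normal_forms}(1), and it is routine to formalize in $\aca$. This already yields uniqueness: any strictly increasing $h$ satisfying the two clauses of Definition~\ref{DefinitionBDf} must agree with $g$ by induction on $m$, since clause~(2) determines $h(m)$ from the (unique) normal form of $m$ and the values of $h$ below $m$.

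For existence, the substance is to verify that the recursively produced $g$ maps $B_D(n)$ into $B_D(n')$ and is strictly increasing. I would prove by induction on $m$ the conjunction of two statements: (i) when $m \ge n$, the expression $g(m) = B_{\alpha'_k}\dots B_{\alpha'_1}(n')$, with $\alpha'_i = D(g)(\alpha_i)$, is a genuine $(n',D(\omega))$-normal form, so that $g(m) < B_D(n')$ by Lemma~\ref{normal_forms}(1); and (ii) $g$ is strictly increasing on $\{0,\dots,m\}$. The ordering requirement $\alpha'_k <_{D(\omega)} \dots <_{D(\omega)} \alpha'_1$ in (i) holds because $D(g\upharpoonright m)$, being the image under the functor $D$ of the strictly increasing map $g\upharpoonright m$ furnished by the induction hypothesis, is an order-embedding. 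The genuinely delicate point, which I expect to be the main obstacle, is the norm condition $N\alpha'_{i+1} \le B_{\alpha'_i}\dots B_{\alpha'_1}(n')$.

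To handle it, I would first record the compatibility $g\big(B_{\alpha_i}\dots B_{\alpha_1}(n)\big) = B_{\alpha'_i}\dots B_{\alpha'_1}(n')$ for $i < k$: each partial value $B_{\alpha_i}\dots B_{\alpha_1}(n)$ is itself an $n$-normal form lying below $m$, and applying the definition of $g$ to it reproduces the very same ordinals $\alpha'_j = D(g)(\alpha_j)$, since $D(g)$ is computed locally. Granting this, if $\supp(\alpha_{i+1})=\emptyset$ then $N\alpha'_{i+1}=0$ and there is nothing to prove; otherwise the support of $\alpha'_{i+1}=D(g)(\alpha_{i+1})$ is the $g$-image of $\supp(\alpha_{i+1})$, whence $N\alpha'_{i+1} = g(N\alpha_{i+1}-1)+1$ by monotonicity of $g$ below $m$. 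From $N\alpha_{i+1}-1 < N\alpha_{i+1} \le B_{\alpha_i}\dots B_{\alpha_1}(n)$ and the same monotonicity I obtain $g(N\alpha_{i+1}-1) < g\big(B_{\alpha_i}\dots B_{\alpha_1}(n)\big) = B_{\alpha'_i}\dots B_{\alpha'_1}(n')$, that is $N\alpha'_{i+1} \le B_{\alpha'_i}\dots B_{\alpha'_1}(n')$, as needed.

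Finally, for the monotonicity clause (ii) I would compare $g(m')$ and $g(m)$ for $m' < m$. When $m' < n$ the value $g(m')=f(m')<n'$ falls below every normal-form value $g(m)\ge n'$, and when both arguments are below $n$ the claim is just strict monotonicity of $f$. In the principal case $n \le m' < m$, Lemma~\ref{normal_forms}(2) converts $m' < m$ into a lexicographic comparison of the two defining tuples; since $D(g\upharpoonright m)$ is an order-embedding that fixes shared entries and strictly increases the first point of disagreement, the $g$-image tuples compare the same way under $<_{\mathsf{lex}}$, and Lemma~\ref{normal_forms}(2) applied over $n'$ — available precisely because (i) guarantees that these are genuine $n'$-normal forms — gives $g(m') < g(m)$. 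This closes the induction and establishes existence.
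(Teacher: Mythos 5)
Your proof is correct and follows essentially the same route as the paper's: a course-of-values recursion on the argument $m$, justified by the observation that the norm bounds in the definition of $n$-normal forms force $N\alpha_i<m$, followed by a verification that normal forms are sent to normal forms (ordering of the $\alpha_i'$ by functoriality of $D$, the norm condition by monotonicity of $g$ below $m$) and that strict monotonicity follows from the lexicographic comparison of Lemma \ref{normal_forms}; the paper merely packages the recursion as an increasing union of partial maps $g_s\colon s\to B_D(n')$. The one inaccuracy is your parenthetical claim that $D(n)$ is finite and hence $B_D(n)\in\mathbb{N}$: this holds only for weakly finite dilators, whereas the lemma is stated for arbitrary $D$, so the recursion may run along all of $\omega$; nothing is lost, since arithmetical course-of-values recursion along $\omega$ is available in $\aca$, which is exactly how the paper forms the union $\bigcup_{s} g_s$.
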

\proof 
By induction on $s\leq B_D(n)$,  we define a sequence of strictly increasing maps
\[g_s: s \to B_D(n')\]
such that $g_{s+1}$ extends $g_s$ for each $s$. 
The map $g_0\colon 0 \to B_D(n')$ is simply the empty map.

Assume that we already have defined maps $g_0,\ldots,g_s$ and established that they are an increasing family of strictly increasing functions. We define the map \[g_{s+1}\colon (s+1)\to B_D(n')\] 
as follows:
\begin{enumerate}
\item if $m<n$, then we put $g_{m+1}(m)=m$;
\item \label{funct_adef_2} if $n \leq m \stackrel{NF}{=} B_{\alpha_{k}}\dots B_{\alpha_1}(n)$, then we put $g_{s+1}(m)=B_{\alpha'_{k}}\dots B_{\alpha'_1}(n')$, where  $\alpha'_{i}=D(g_s)(\alpha_{i})$ for each $i$.
\end{enumerate}
Note that in the second clause the values $D(g_s)(\alpha_i)$ are well-defined. This is because, by the condition on $n$-normal forms, we have 
\begin{equation}\label{eqNalphai<s}
N\alpha_i\le B_{\alpha_{i-1}}\dots B_{\alpha_1}(n')<m\le s,
\end{equation}
and hence $\alpha_i\in D(s)$. 

Let us check that the expression $g_{s+1}(m)$ in \eqref{funct_adef_2} is an $n$-normal form. The fact that $\alpha'_k<_{D(\omega)}\ldots<_{D(\omega)}\alpha'_1$ simply follows from the functoriality of $D$. We show that $N \alpha_i'\le B_{\alpha'_{i-1}}\dots B_{\alpha'_1}(n')$. By definition,
$\alpha'_i=D(g_s)(\alpha_i)$.
Thus, $\alpha'_i$ is the value a $D$-term where all the constants result from shifting constants ${\leq}N\alpha_i$ according to $g_s$. Hence, $\alpha'_i$ is the value of a $D$-term with constants $\leq{g_s(N\alpha_i)}$. In other words,
 $D(g_s)(\alpha_i) \in D(g_s(N\alpha_i))$. By the definition of $N$, it follows that
$$N\alpha'_i\le g_s(N\alpha_i)\le g_s(B_{\alpha_{i-1}}\dots B_{\alpha_1}(n))=B_{\alpha'_{i-1}}\dots B_{\alpha'_1}(n'),$$
as desired.
Hence all the values produced in \eqref{funct_adef_2} are indeed normal forms. 

Now, the comparision algorithm for $n$-normal forms provided by Lemma \ref{normal_forms} implies that $g_{s+1}$ is strictly increasing. If $s=0$, then $g_{s+1}$ extends $g_{s}$ simply because $g_0$ is the empty map. If $s>0$ then we immediately obtain that $g_{s+1}$ extends $g_s$ from their definitions and the fact that $g_{s}$ extends $g_{s-1}$.

Consider 
\[g\colon B_D(n)\to B_D(n')\]
given by
\[g=\bigcup\limits_{m\le B_D(n)}g_m.\]
From the definition, it is clear that $g$ satisfies properties \eqref{funct_def_1} and \eqref{funct_def_2} of the definition of $B_D(f)$. By a straightforward induction on $m$ we show that if 
\[g'\colon B_D(n)\to B_D(n')\] 
is any other function satisfying properties \eqref{funct_def_1} and \eqref{funct_def_2} of the definition of $B_D(f)$, then $g(m)=g'(m)$. Therefore, the definition of $B_D(f)$ indeed defines a unique strictly increasing function.
\endproof

\begin{corollary} \label{CorollaryFDFunctorNatural}
$B_D$ is a functor on the category $\mathsf{Nat}$.
\end{corollary}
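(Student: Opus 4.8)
The plan is to observe that the object part $n \mapsto B_D(n)$ and the morphism part $f \mapsto B_D(f)$ have already been set up (the latter in Definition~\ref{DefinitionBDf}, with existence and uniqueness of the strictly increasing function $B_D(f)\colon B_D(n)\to B_D(n')$ guaranteed by the preceding lemma), so that all that remains for the corollary is to verify the two functor axioms: preservation of identities, $B_D(\id_n) = \id_{B_D(n)}$, and preservation of composition, $B_D(g \circ f) = B_D(g) \circ B_D(f)$. In both cases the strategy is identical: exhibit a strictly increasing map that satisfies the two defining clauses of Definition~\ref{DefinitionBDf} for the morphism in question, and then invoke the uniqueness clause of the preceding lemma to conclude.

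For identities, I would check that $\id_{B_D(n)}$ itself satisfies the defining clauses of $B_D(\id_n)$. Clause~(1) holds because $\id_{B_D(n)}(m) = m = \id_n(m)$ for $m < n$. For clause~(2), given $n \le m \stackrel{NF}{=} B_{\alpha_k}\dots B_{\alpha_1}(n)$, the prescribed shift is $\alpha'_i = D(\id_{B_D(n)})(\alpha_i)$; since $D$ is a functor, $D(\id_{B_D(n)}) = \id_{D(B_D(n))}$, so $\alpha'_i = \alpha_i$ and the prescribed normal form evaluates to $m$ again, matching $\id_{B_D(n)}(m)$. Uniqueness then yields $B_D(\id_n) = \id_{B_D(n)}$.

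For composition, given $f \colon n \to n'$ and $g \colon n' \to n''$, I would put $h = B_D(g) \circ B_D(f) \colon B_D(n) \to B_D(n'')$, which is strictly increasing, and verify that it satisfies the clauses defining $B_D(g \circ f)$. Clause~(1) is routine: for $m < n$ we have $f(m) < n'$, hence $h(m) = B_D(g)(f(m)) = g(f(m)) = (g \circ f)(m)$. For clause~(2), given $n \le m \stackrel{NF}{=} B_{\alpha_k}\dots B_{\alpha_1}(n)$, the preceding lemma tells us that $B_D(f)(m) = B_{\alpha'_k}\dots B_{\alpha'_1}(n')$ is an $n'$-normal form with $\alpha'_i = D(B_D(f))(\alpha_i)$; applying $B_D(g)$ then gives $h(m) = B_{\alpha''_k}\dots B_{\alpha''_1}(n'')$ with $\alpha''_i = D(B_D(g))(\alpha'_i)$. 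The key computation is
\[
\alpha''_i = D(B_D(g))\bigl(D(B_D(f))(\alpha_i)\bigr) = D\bigl(B_D(g) \circ B_D(f)\bigr)(\alpha_i) = D(h)(\alpha_i),
\]
where the middle equality uses that $D$ preserves composition. This is exactly the self-referential form demanded by clause~(2), so uniqueness gives $B_D(g \circ f) = h = B_D(g)\circ B_D(f)$.

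I expect the only genuine subtlety to be the bookkeeping around the self-referential clause~(2) of Definition~\ref{DefinitionBDf}: one must confirm that the intermediate value $B_D(f)(m)$ is genuinely presented as an $n'$-normal form, so that clause~(2) of $B_D(g)$ (rather than clause~(1)) applies, and that the boundary case $m = n$ (the empty normal form, $k = 0$) is absorbed uniformly. Both points are already secured -- the normal-form presentation by the preceding lemma, and the fact that $B_\beta(x) > x$ forces $B_D(f)(m) \ge n'$ whenever $m \ge n$ -- so that the functoriality of $D$ does all the real work.
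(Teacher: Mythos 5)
Your proposal is correct and takes exactly the route the paper intends: the corollary is stated without proof because it is meant to follow immediately from the uniqueness clause of the preceding lemma, and your explicit verification of identity and composition preservation (exhibiting $\id_{B_D(n)}$ and $B_D(g)\circ B_D(f)$ as strictly increasing maps satisfying the two clauses of Definition~\ref{DefinitionBDf}, then invoking uniqueness) is precisely the argument being left implicit. Your attention to the boundary case $k=0$ and to why clause~(2) rather than clause~(1) applies to $B_D(f)(m)$ for $m\ge n$ is sound and fills in the only details one could worry about.
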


%As a consequence of Corollary \ref{CorollaryFDFunctorNatural}, $B_D$ can be uniquely extended to a functor on the category of all linear orders in a way such that $B_D$ preserves directed co-limits.

Commenting on an earlier draft of this article, Wainer has brought to our attention an earlier result of his \cite{Wa99} in which he establishes functoriality for a version of the binary fast-growing hierarchy in the context of tree-ordinals.

\section{Fast-growing hierarchies as ordinal collapses}\label{SectCollapse}
In this section, we give an alternate definition of $B_D$ in terms of formal ``ordinal collapses'' which will directly result in a $\subseteq$-preserving dilator. 
We will show that the new construction, as a functor, is naturally isomorphic to the one constructed in the previous section. 
It will be clear from the construction that this definition coincides with the previous one when $A$ is a natural number with the usual ordering. 
As a consequence of this and Corollary \ref{CorollaryFDFunctorNatural}, it follows that the new definition coincides with the previous one for arbitrary linear orderings $A$. Again, all the constructions will be formalizable in $\mathsf{ACA}_0$.

Recall that for a linear order $A$, $2^A$ is the linear order consisting of formal base-$2$ Cantor normal forms \[2^{a_1}+\ldots+2^{a_n}\] 
with $a_1>_A\ldots>_Aa_n$. These formal sums are compared in the natural way, i.e., 
$2^{a_1}+\ldots+2^{a_n}<_{2^A}2^{b_1}+\ldots+2^{b_m}$ if $(a_1,\ldots,a_n)<_{\mathsf{lex}}(b_1,\ldots,b_m)$. Here, the empty sum is allowed and identified with the term $0$.
This construction naturally extends to a dilator: given a strictly increasing  function $f\colon A\to B$, we define
\begin{align*}
2^f\colon 2^A &\to 2^B\\
2^f\colon 2^{a_1}+\ldots+2^{a_n}&\mapsto 2^{f(a_1)}+\ldots+2^{f(a_n)}.
\end{align*}

Below, if $A$ is a linear ordering and $a \in A$, we denote by $A\upharpoonright a$ the initial segment $\{x\in A: x<a\}$, viewed as a linear order.
\begin{definition}
Let $A$ be a linear ordering. $B_D(A)$ is defined to be the \mbox{(inclusion-)least} linear ordering $C$ such that the following hold:
\begin{enumerate}
\item $C$ contains the term $a^\star$, for any $a\in A$.
\item $C$ contains the term $\psi(0)$.
\item Suppose that:
\begin{enumerate}
\item $\alpha_1,\ldots,\alpha_k,\alpha_{k+1}\in D(C)$,
\item $\psi(2^{\alpha_1}+\ldots+2^{\alpha_k})\in C$,
\item $\alpha_{k+1}<_{D(C)}\alpha_k$, and 
\item $\alpha_{k+1}\in D(C\upharpoonright\psi(2^{\alpha_1}+\ldots+2^{\alpha_k}))$;
\end{enumerate}
 then   $\psi(2^{\alpha_1}+\ldots+2^{\alpha_k}+2^{\alpha_{k+1}})\in C$.
\item if $a<_Ab$, then $a^{\star}<_Cb^\star$.
\item $a^\star<_C\psi(t)$, for any  $a^\star,\psi(t)\in C$.
\item if $\psi(t), \psi(u)\in C$, $t,u\in 2^{D(C)}$, and $t<_{2^{D(C)}}u$, then  $\psi(t)<_C\psi(u)$.
\end{enumerate}
\end{definition}

\begin{lemma}[$\aca$]
Suppose $D$ is a dilator and $A$ is a linear ordering. Then, $B_D(A)$ exists.
\end{lemma}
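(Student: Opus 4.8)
The plan is to construct $B_D(A)$ explicitly as the union of an increasing sequence of approximations, and then verify that this union satisfies the closure conditions of the definition while being contained in any other linear order $C$ satisfying them. The construction is inherently inductive: the closure conditions (1)--(3) specify which terms must appear, but condition (3) refers to $D(C)$ and to initial segments $C\upharpoonright\psi(t)$ of the order being built, so the definition is impredicative on its face. The key point making this work in $\aca$ is that the construction is stratified by the \emph{depth} of nesting of the $\psi$-operator, and each layer adds only terms whose subterms already appear at strictly lower depth.

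First I would make precise the notion of a $B_D(A)$-term. Every element is either of the form $a^\star$ for $a\in A$, or of the form $\psi(t)$ where $t\in 2^{D(C)}$, i.e.\ $t=2^{\alpha_1}+\ldots+2^{\alpha_k}$ with each $\alpha_i$ itself a $D$-term whose constants are again $B_D(A)$-terms. I would assign to each such formal expression a natural-number \emph{rank} measuring the maximal nesting depth of $\psi$, and define $C_r$ to be the set of well-formed terms of rank ${\le}r$ that satisfy the admissibility constraints in condition (3)(c)--(d). The admissibility constraints---$\alpha_{k+1}<_{D(C)}\alpha_k$ and the support condition $\alpha_{k+1}\in D(C\upharpoonright\psi(2^{\alpha_1}+\ldots+2^{\alpha_k}))$---can be checked using only terms of strictly smaller rank, so $C_{r+1}$ is arithmetically definable from $C_r$ together with the comparison relation on $C_r$. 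By arithmetical comprehension and primitive recursion, the sequence $\langle C_r : r\in\mathbb{N}\rangle$ exists, and I would set $B_D(A)=\bigcup_r C_r$.

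Next I would define the ordering $<_C$ on $B_D(A)$ simultaneously with the terms, following clauses (4)--(6): comparisons of $a^\star$-terms reduce to comparisons in $A$; every $a^\star$ precedes every $\psi(t)$; and $\psi(t)<_C\psi(u)$ iff $t<_{2^{D(C)}}u$, which unwinds to a lexicographic comparison of the exponent sequences, hence recursively to comparisons of lower-rank terms. I would verify by induction on rank that $<_C$ is a linear order---the only subtlety is transitivity and trichotomy for the $\psi$-terms, which follow from the corresponding properties of $2^{D(C)}$ granted inductively that $<_C$ is already linear on the lower-rank terms feeding into $D(C)$. One must also check that the support condition in (3)(d) is coherent, i.e.\ that the initial segment $C\upharpoonright\psi(t)$ used to admit a new term is already fully determined at the stage it is needed; this is exactly what the rank stratification guarantees, since $\psi(2^{\alpha_1}+\ldots+2^{\alpha_k})$ has strictly smaller rank than the new term $\psi(2^{\alpha_1}+\ldots+2^{\alpha_{k+1}})$.

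Finally, I would establish the two halves of the extremal characterization. For closure, a routine check confirms that $\bigcup_r C_r$ satisfies (1)--(6), since any instance of condition (3) has all its hypotheses living in some $C_r$ and hence produces a term in $C_{r+1}$. For minimality, given any $C'$ satisfying (1)--(6), I would show $C_r\subseteq C'$ by induction on $r$: the base terms $a^\star$ and $\psi(0)$ lie in $C'$ by (1)--(2), and each admissible term of rank $r+1$ is forced into $C'$ by condition (3) applied to its immediate subterms, which lie in $C'$ by the induction hypothesis. The main obstacle I anticipate is bookkeeping the interplay between the rank stratification and the self-referential appearances of $D(C)$ and $C\upharpoonright\psi(t)$ inside condition (3): one must argue carefully that evaluating $D$ on the partially-built order, and comparing within it, never requires information about terms of rank exceeding the current stage---so that the recursion genuinely closes off at each finite level and the whole construction remains arithmetical rather than $\Pi^1_1$. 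Here the finitarity and $\subseteq$-preservation of $D$ are essential, since they let us compute $D(C_r)$ and its comparisons from the finite suborders of $C_r$, keeping everything within $\aca$.
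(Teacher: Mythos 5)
Your overall strategy is the same as the paper's: build $B_D(A)$ as the union of an $\omega$-chain of approximations, each stage arithmetically definable from the preceding ones, and then check that the union is closed under the generating clauses and minimal among orders satisfying them. However, there is one concrete step in your stratification that fails as stated. You define the rank of a term as the maximal nesting depth of $\psi$, and you justify the coherence of the recursion by asserting that $\psi(2^{\alpha_1}+\cdots+2^{\alpha_k})$ has strictly smaller rank than $\psi(2^{\alpha_1}+\cdots+2^{\alpha_{k+1}})$. With your notion of rank this is false: adding one more summand $2^{\alpha_{k+1}}$ does not increase the $\psi$-nesting depth unless $\alpha_{k+1}$ happens to contain deeper $\psi$-terms than $\alpha_1,\ldots,\alpha_k$ do. For example, if all the $\alpha_i$ are $D$-terms whose constants are of the form $a^\star$, then $\psi(2^{\alpha_1})$ and $\psi(2^{\alpha_1}+2^{\alpha_2})$ have the same nesting depth. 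Consequently, deciding admissibility of a rank-$r$ term via clause (3) requires already knowing membership of another rank-$r$ term (its immediate predecessor with one fewer summand), so your $C_{r+1}$ is not arithmetically definable from $C_r$ alone in the way you claim, and the induction for minimality ("by condition (3) applied to its immediate subterms, which lie in $C'$ by the induction hypothesis") has the same hole.

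The repair is routine and brings you exactly to the paper's construction: either define the rank recursively so that the rank of $\psi(2^{\alpha_1}+\cdots+2^{\alpha_{k+1}})$ exceeds both the rank of $\psi(2^{\alpha_1}+\cdots+2^{\alpha_k})$ and the ranks of the constants occurring in $\alpha_{k+1}$, or (as the paper does) dispense with a syntactic rank altogether and let the stages define themselves: put into $B_{D,n}(A)$ every term $\psi(2^{\alpha_1}+\cdots+2^{\alpha_{k+1}})$ whose prerequisite $\psi(2^{\alpha_1}+\cdots+2^{\alpha_k})$ and whose side conditions are witnessed inside $B_{D,<n}(A)=\bigcup_{m<n}B_{D,m}(A)$. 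Alternatively, a secondary induction on the number of summands $k$ within each of your rank levels would also close the gap. Your remaining points --- the simultaneous definition of the order, the reduction of $\psi$-comparisons to lexicographic comparisons of exponents, the role of finitarity and $\subseteq$-preservation of $D$ in keeping everything arithmetical, and the two halves of the extremal characterization --- are all correct and match the intended argument.
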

\proof
The idea is to construct $B_D(A)$ as the limit of an inductive definition of length $\omega$.
We construct linear orders 
\[B_{D,0}(A)\subseteq B_{D,1}(A)\subseteq \ldots \subseteq B_{D,n}(A)\subseteq\ldots\]
Recursively,we denote by $B_{D,<n}(A)$ the already defined linear order $\bigcup\limits_{m<n}B_{D,m}(A)$ and define $B_{D,n}(A)$ to contain the following terms:
\begin{enumerate}  
\item $a^\star$, for $a\in A$;
\item $\psi(0)$;
\item $\psi(2^{\alpha_1}+\ldots+2^{\alpha_k}+2^{\alpha_{k+1}})$, whenever $\alpha_1,\ldots,\alpha_k,\alpha_{k+1}\in D(B_{D,<n}(A))$ are such that:
\begin{enumerate}
\item $\psi(2^{\alpha_1}+\ldots+2^{\alpha_k})\in B_{D,<n}(A)$, 
\item $\alpha_{k+1}<_{D(B_{D,<n}(A))}\alpha_k$, and 
\item $\alpha_{k+1}\in D( B_{D,<n}(A){\upharpoonright} \psi(2^{\alpha_1}+\ldots+2^{\alpha_k}))$.
\end{enumerate}
\end{enumerate}
The order on these terms is defined in the same way as above, i.e., $a^\star<_{B_{D,<n}(A)}b^\star$ if and only if $a<_Ab$; we always have $a^\star<_{B_{D,<n}(A)}\psi(t)$; and $\psi(t)<_{B_{D,n}(A)}\psi(u)$ if and only if $t<_{2^{D(B)}}u$. It is straightforward to check that, provably in $\mathsf{ACA}_0$, the order $B_D(A)$ constructed as the union of the chain $\{B_{D, <n}(A):n\in\mathbb{N}\}$ satisfies the definition of $B_D(A)$, for any linear ordering $A$.
\endproof

\begin{definition}
Let $A$ and $A'$ be linear orders and $f: A\to A'$ be a morphism. We define
\[B_D(f): B_D(A)\to B_D(A')\] 
to be the unique map $g\colon B_D(A)\to B_D(B)$ such that:
\begin{enumerate}
\item $g(a^{\star}) = (f(a))^{\star}$, and
\item $g(\psi(t))=\psi(2^{D(g)}(t))$.
\end{enumerate}
\end{definition}

\begin{lemma}[$\aca$]
Suppose that $A$ and $A'$ are linear orders and $f:A \to A'$ is a morphism. Then, $B_D(f)$ exists.
\end{lemma}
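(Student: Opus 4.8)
The plan is to define $g=B_D(f)$ by recursion along the stages $B_{D,n}(A)$ of the inductive construction of $B_D(A)$, carrying through simultaneously, by induction on $n$, that $g$ is well-defined, takes values in $B_D(A')$, is strictly increasing, and obeys clauses (1)--(2) of the definition of $B_D(f)$. What makes such a recursion legitimate is that a term $\psi(2^{\alpha_1}+\ldots+2^{\alpha_{k+1}})$ enters at a stage $n$ only when $\alpha_1,\ldots,\alpha_{k+1}\in D(B_{D,<n}(A))$ and $\psi(2^{\alpha_1}+\ldots+2^{\alpha_k})\in B_{D,<n}(A)$; hence both the supports $\supp(\alpha_i)$ and the ``predecessor term'' already lie in $B_{D,<n}(A)$, where $g$ has been defined.

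Concretely, on constants and on $\psi(0)$ we are forced to put $g(a^\star)=(f(a))^\star$ and $g(\psi(0))=\psi(0)$. Assume $g$ is defined, strictly increasing, and $B_D(A')$-valued on $B_{D,<n}(A)$. Since $D$ is $\subseteq$-preserving and finitary, $D(g)(\alpha_i)$ depends only on $g\upharpoonright\supp(\alpha_i)\subseteq B_{D,<n}(A)$, so $\alpha'_i:=D(g)(\alpha_i)\in D(B_D(A'))$ is well-defined; we then set
\[g\bigl(\psi(2^{\alpha_1}+\ldots+2^{\alpha_{k+1}})\bigr)=\psi\bigl(2^{\alpha'_1}+\ldots+2^{\alpha'_{k+1}}\bigr),\]
which is exactly clause (2) read through $2^{D(g)}$.

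The substantive step is to check that this value really belongs to $B_D(A')$, i.e.\ that $\alpha'_1,\ldots,\alpha'_{k+1}$ satisfy the closure conditions in the definition of $B_D(A')$. First, $\psi(2^{\alpha'_1}+\ldots+2^{\alpha'_k})=g(\psi(2^{\alpha_1}+\ldots+2^{\alpha_k}))$ lies in $B_D(A')$ by the induction hypothesis. Second, $\alpha'_{k+1}<_{D(B_D(A'))}\alpha'_k$ because $D(g)$ is order-preserving and $\alpha_{k+1}<_{D(B_D(A))}\alpha_k$. Third, and this is where monotonicity of $g$ is needed, the initial-segment condition (d) requires $\alpha'_{k+1}\in D\bigl(B_D(A')\upharpoonright\psi(2^{\alpha'_1}+\ldots+2^{\alpha'_k})\bigr)$: since every element of $\supp(\alpha_{k+1})$ is $<_{B_D(A)}\psi(2^{\alpha_1}+\ldots+2^{\alpha_k})$ and $g$ is strictly increasing, $\supp(\alpha'_{k+1})=g(\supp(\alpha_{k+1}))$ consists of elements $<_{B_D(A')}\psi(2^{\alpha'_1}+\ldots+2^{\alpha'_k})$, giving precisely the required membership. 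Hence $g$ is well-defined on $B_{D,n}(A)$.

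It remains to extend strict monotonicity to $B_{D,n}(A)$, which I would read off the comparison rules: on constants it is the monotonicity of $f$; a constant always precedes a $\psi$-term on both sides; and for two $\psi$-terms the ordering is the lexicographic one, so $t<_{2^{D(B_D(A))}}u \iff 2^{D(g)}(t)<_{2^{D(B_D(A'))}}2^{D(g)}(u)$ because $D(g)$, and therefore $2^{D(g)}$, preserve order. Passing to the union over the chain $\bigcup_n B_{D,n}(A)=B_D(A)$ yields a strictly increasing $g\colon B_D(A)\to B_D(A')$ satisfying clauses (1)--(2); uniqueness is immediate, since these clauses determine $g$ by the same recursion. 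The whole construction is an arithmetical recursion of length $\omega$ relative to $D$, $f$, $B_D(A)$, and $B_D(A')$, hence available in $\aca$. I expect the main obstacle to be exactly the interdependence flagged in the third paragraph: well-definedness of the image (condition (d)) presupposes that $g$ is already monotone on the earlier stages, while monotonicity is itself being established in the same induction, so the two must be proved together rather than sequentially.
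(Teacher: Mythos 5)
Your proposal is correct and follows essentially the same route as the paper: both define $B_D(f)$ by recursion on the stages $B_{D,n}(A)$ of the inductive construction, setting $B_{D,n}(f)(\psi(t))=\psi(2^{D(B_{D,<n}(f))}(t))$ and taking the union, with uniqueness by induction on $n$. You merely spell out the verification (closure conditions in $B_D(A')$, the interlocking of well-definedness with monotonicity) that the paper leaves implicit behind ``this is similar to the previous proof.''
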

\proof
This is similar to the previous proof. By induction, we define a sequence of embeddings 
$B_{D,n}(f)\colon B_{D,n}(A)\to B_{D,n}(B)$. Letting
\[B_{D,<n}(f)=\bigcup_{m<n} B_{D,m}(f)\colon B_{D,<n}(A)\to B_{D,<n}(B),\] 
we put:
\begin{enumerate}
\item $B_{D,n}(f)(a^{\star}) = (f(a))^{\star}$, and
\item $B_{D,n}(f)(\psi(t))=\psi(2^{D(B_{D,<n}(f))}(t))$.
\end{enumerate}
Clearly, $B_{D,<\omega}(f)$ satisfies the definition of $B_D(f)$. By induction on $n$, one shows that for  any $g\colon B_D(A)\to B_D(B)$ satisfying the definition of $B_D(f)$ we have 
\[g{\upharpoonright} B_{D,n}(A)=B_{D,n}(f).\] 
Thus indeed there is a unique mapping satisfying the definition of $B_D(f)$.
\endproof

%In general, by induction on the syntactic representation of $\alpha$, we set
%\[B_D(f)(\psi(2^{\alpha_1} + \dots + 2^{\alpha_k})) = \psi(2^{\alpha'_1} + \dots + 2^{\alpha'_k})\]
%so that $\alpha'_1 = D(f)(\alpha_1)$, and, writing 
%\[\alpha_i = t(x_1, \hdots, x_{k_1})\]
%as a $D$-term, we have
%\[\alpha'_i = t(B_D(f)(x_1), \hdots, B_D(f)(x_{k_i})),\]
%defined by induction on $i$.

\begin{lemma}[$\mathsf{ACA}_0$]\label{LemmaPullbacks}
Suppose $D$ is a $\subseteq$-preserving pre-dilator. Then, so too is $B_D$. 
\end{lemma}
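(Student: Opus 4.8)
The plan is to verify that $B_D$ is a $\subseteq$-preserving finitary pre-dilator by checking the reformulated conditions given in the Preliminaries for $\subseteq$-preserving functors, namely that $B_D$ is $\subseteq$-preserving, that $B_D(A)=\bigcup_{A'\subseteq_{\mathsf{fin}}A}B_D(A')$ (finitarity), and that $B_D(B\cap C)=B_D(B)\cap B_D(C)$ for suborders $B,C$ of a common order $A$ (the pullback condition). Since we assume $D$ is a $\subseteq$-preserving pre-dilator, I may freely use that $D(B)\subseteq D(A)$ whenever $B\subseteq A$, that $D$ is finitary, and that $D(B\cap C)=D(B)\cap D(C)$.

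First I would establish $\subseteq$-preservation. The key observation is that the definition of $B_D(A)$ as the inclusion-least order closed under the generation clauses is manifestly monotone in $A$: if $B\subseteq A$, then every generating condition witnessed inside $B$ is also witnessed inside $A$ (using $D(B)\subseteq D(A)$ and that initial segments $B\upharpoonright x$ sit inside $A\upharpoonright x$), so $B_D(B)\subseteq B_D(A)$ as \emph{sets of terms}. I would argue by induction along the stages $B_{D,n}$ that each term generated over $B$ is generated over $A$ at the same or earlier stage, and that the term $a^\star$ and the $\psi$-terms are literally the same syntactic expressions in both orders. That the inclusion respects the ordering is immediate, since the comparison rules for $a^\star$-terms and $\psi$-terms depend only on the orderings of $A$ and of $2^{D(\cdot)}$, which are themselves $\subseteq$-preserved; and that $B_D(\id_{B\to A})$ is the identity inclusion follows because the defining clauses for $B_D(f)$ send $a^\star\mapsto a^\star$ and $\psi(t)\mapsto\psi(2^{D(\id)}(t))=\psi(t)$.

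Next I would check finitarity. Every term in $B_D(A)$ is built from finitely many generating steps, and each step references only finitely many elements of $A$ (through the $a^\star$ occurring in supports) and finitely many elements of $D(A)$; using that $D$ itself is finitary, each $\alpha_i\in D(A)$ already lies in $D(A')$ for some finite $A'\subseteq_{\mathsf{fin}}A$. Collecting the finitely many relevant finite suborders and taking their union, one finds a single finite $A'$ over which the whole term is generated, so $B_D(A)=\bigcup_{A'\subseteq_{\mathsf{fin}}A}B_D(A')$. This is a routine induction on the stage at which a term appears.

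Finally, the pullback condition $B_D(B\cap C)=B_D(B)\cap B_D(C)$ is the main obstacle. The inclusion $\subseteq$ is immediate from $\subseteq$-preservation. For $\supseteq$, suppose a term $t$ lies in both $B_D(B)$ and $B_D(C)$; I would show by induction on the generation stage that $t\in B_D(B\cap C)$. The $a^\star$ case reduces to $a\in B\cap C$, which is clear. For a $\psi$-term $\psi(2^{\alpha_1}+\ldots+2^{\alpha_k})$, the subtle point is that the \emph{same} syntactic term can in principle be generated in $B_D(B)$ and in $B_D(C)$ via different supporting data; here one uses that the $\alpha_i$ are syntactically determined by the term, that they lie in $D(B)\cap D(C)=D(B\cap C)$ by the pullback property of $D$, and that the earlier $\psi$-subterms and the relevant initial segments likewise descend to $B\cap C$ by induction. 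The care needed is to confirm that the witnessing conditions (the norm/initial-segment constraint $\alpha_{k+1}\in D(C'\upharpoonright\psi(\ldots))$) are preserved when passing to $B\cap C$, which follows because the initial segment of $B_D(B\cap C)$ below a $\psi$-term equals the intersection of the corresponding initial segments, again by the inductive hypothesis. I expect the bookkeeping in this last step — matching up supports across the two generation trees and invoking $D(B\cap C)=D(B)\cap D(C)$ at the right places — to be where the real work lies.
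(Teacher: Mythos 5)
Your proposal is correct and follows essentially the same route as the paper: the paper likewise treats $\subseteq$-preservation as immediate from the definition and establishes finitarity and the pullback condition $B_D(A')\cap B_D(A'')=B_D(A'\cap A'')$ by straightforward inductions on the stages $B_{D,n}$, using the corresponding properties of $D$ at each step. Your expansion of the pullback induction (invoking $D(B)\cap D(C)=D(B\cap C)$ and the inductive identification of initial segments) is exactly the ``straightforward induction'' the paper leaves to the reader.
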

\proof
The functoriality and $\subseteq$-preservation of $B_D$ are immediate from the definition. In order to check that $B_D$ is finitary it is enough to show that for any $A$ we have 
\[B_D(A)=\bigcup\limits_{A'\subseteq_{\mathsf{fin}}A}B_D(A').\] 
This is done by showing, via a straightforward induction on $n$, that
\[B_{D,n}(A)=\bigcup\limits_{A'\subseteq_{\mathsf{fin}}A}B_{D,n}(A').\] 
By a straightforward induction on $n$ we also show that for any order $A$ and any suborders $A',A''$ of $A$, we have 
\[B_{D,n}(A')\cap B_{D,n}(A'')=B_{D,n}(A'\cap A'').\] 
This implies that for any order $A$ and its suborders $A',A''$ we have 
\[B_{D}(A')\cap B_{D}(A'')=B_{D}(A'\cap A''),\]
i.e., that $B_D$ preserves pullbacks. This concludes the proof that $B_D$  is a $\subseteq$-preserving pre-dilator.
\endproof

For an arbitrary pre-dilator $D$,  we may define the pre-dilator $B_D$ to be $B_{D'}$, where $D'$ is some fixed $\subseteq$-preserving pre-dilator naturally isomorphic to $D$. 

\begin{lemma}[$\mathsf{ACA}_0$]\label{EquivalenceOfDefinitions} 
Suppose $D$ is a weakly finite dilator. Let $B_D$ be the functor defined in this section and $\hat B_D$ be the functor defined in Section \ref{SectFunctor}. Then, $B_D$ and $\hat B_D$ are naturally isomorphic.
\end{lemma}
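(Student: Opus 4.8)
The plan is to reduce everything to the subcategory $\Nat$ and then build an explicit order isomorphism there. Both $B_D$ and $\hat B_D$ are finitary functors on $\LO$: the former by Lemma \ref{LemmaPullbacks}, the latter because it is the canonical finitary extension (described in the Preliminaries) of a functor on $\Nat$, namely the one of Corollary \ref{CorollaryFDFunctorNatural}. Since a finitary functor is recovered from its restriction to $\Nat$ as the direct limit $\varinjlim D[H']$ of the associated diagram, a natural isomorphism between the restrictions of $B_D$ and $\hat B_D$ to $\Nat$ induces, levelwise on these diagrams, a natural isomorphism of the two direct-limit extensions, hence on all of $\LO$. (The hypothesis that $D$ is weakly finite is what makes the hierarchy $B_\alpha$ and the normal-form machinery of Section \ref{SectFunctor} available.) It therefore suffices to produce, for each natural number $n$, an order isomorphism $\eta_n\colon \hat B_D(n)\to B_D(n)$ natural in $n$.

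I would define $\eta_n$ by recursion on the value $m$. For $m<n$ put $\eta_n(m)=m^\star$. For $n\le m$ with $n$-normal form $m\stackrel{NF}{=}B_{\alpha_k}\dots B_{\alpha_1}(n)$ (so $\alpha_1>_{D(\omega)}\dots>_{D(\omega)}\alpha_k$) put
\[\eta_n(m)=\psi\big(2^{D(\eta_n)(\alpha_1)}+\dots+2^{D(\eta_n)(\alpha_k)}\big).\]
The guiding idea is that a tower $B_{\alpha_k}\dots B_{\alpha_1}$ corresponds to the base-$2$ Cantor normal form $2^{\alpha_1}+\dots+2^{\alpha_k}$ sitting inside a single collapse $\psi(-)$, with the normal-form norm bound matching the support bound built into the collapsing construction. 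Writing $s_i=B_{\alpha_i}\dots B_{\alpha_1}(n)$, the recursion is well founded because the normal-form condition gives $N\alpha_i\le s_{i-1}<m$, so $\alpha_i\in D(s_{i-1})$ and, by $\subseteq$-preservation and finitariness of $D$, the value $D(\eta_n)(\alpha_i)=D(\eta_n{\upharpoonright}s_{i-1})(\alpha_i)$ depends only on $\eta_n$ below the stage $m$.

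I would then check, by induction on $m$, that $\eta_n$ is an order isomorphism. For well-definedness one verifies that each $\psi\big(2^{\gamma_1}+\dots+2^{\gamma_k}\big)$, with $\gamma_i=D(\eta_n)(\alpha_i)$, actually lies in $B_D(n)$; this is exactly closure clause (3), whose hypotheses translate directly: $\gamma_k<_{D(C)}\gamma_{k-1}$ from $\alpha_k<\alpha_{k-1}$ and monotonicity of $D(\eta_n)$, and $\gamma_k\in D\big(C{\upharpoonright}\psi(2^{\gamma_1}+\dots+2^{\gamma_{k-1}})\big)$ from $N\alpha_k\le s_{k-1}$, since then $\eta_n$ maps $\supp(\alpha_k)$ below $\eta_n(s_{k-1})=\psi(2^{\gamma_1}+\dots+2^{\gamma_{k-1}})$. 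Order preservation splits into the cases $m,m'<n$, mixed, and $m,m'\ge n$: the first two are clauses (4) and (5) of the definition, and in the last one $n$-normal forms are compared lexicographically by Lemma \ref{normal_forms}, base-$2$ Cantor normal forms are compared lexicographically on their exponents, and the two comparisons agree because $D(\eta_n)$ is order preserving. An order-preserving map is injective, so the remaining point, which I expect to be the main obstacle, is surjectivity.

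For surjectivity I would argue along the inductive construction $B_D(n)=\bigcup_j B_{D,j}(n)$, the induction hypothesis being that $\eta_n$ already maps onto every element generated earlier. The generators $m^\star$ and $\psi(0)$ are $\eta_n(m)$ and $\eta_n(n)$. A new term $c=\psi(2^{\gamma_1}+\dots+2^{\gamma_k}+2^{\gamma_{k+1}})$ is built from $\psi(2^{\gamma_1}+\dots+2^{\gamma_k})$, which by hypothesis equals $\eta_n(s)$ for some $s$; moreover $\supp(\gamma_{k+1})$ consists of elements generated earlier, each therefore of the form $\eta_n(j')$ with $j'<s$ (using order preservation and $\eta_n(j')<_C\eta_n(s)$), so $\gamma_{k+1}\in D(\eta_n[\{0,\dots,s-1\}])$ and, $D(\eta_n{\upharpoonright}s)$ being an isomorphism onto its image, pulls back to some $\alpha_{k+1}\in D(s)$ with $D(\eta_n)(\alpha_{k+1})=\gamma_{k+1}$, $\alpha_{k+1}<\alpha_k$, and $N\alpha_{k+1}\le s$. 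Then $B_{\alpha_{k+1}}\dots B_{\alpha_1}(n)$ is a legitimate $n$-normal form whose $\eta_n$-image is $c$. Finally, naturality $B_D(f)\circ\eta_n=\eta_{n'}\circ\hat B_D(f)$ is a routine induction on $m$, matching the clause of Definition \ref{DefinitionBDf} (which applies $D(\hat B_D(f))$ to the $\alpha_i$) against the clause $B_D(f)(\psi(t))=\psi(2^{D(B_D(f))}(t))$, using functoriality of $D$ and of $A\mapsto 2^A$ to see that both routes yield the same collapse term.
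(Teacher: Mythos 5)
Your proposal follows essentially the same route as the paper: define $\eta_n$ on $\Nat$ by $m\mapsto m^\star$ for $m<n$ and $B_{\alpha_k}\dots B_{\alpha_1}(n)\mapsto\psi(2^{\alpha_1'}+\dots+2^{\alpha_k'})$ on normal forms, verify it is a natural isomorphism on $\Nat$, and extend to $\LO$ by finitariness. The paper declares the verification ``clear'' where you spell out well-definedness, order preservation, surjectivity, and naturality (and your use of $D(\eta_n)$ to transport the $\alpha_i$ into $D(B_D(n))$ is the precise form of what the paper leaves implicit), so the argument is correct and matches the intended proof.
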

\proof
Let $\theta\colon D\to D'$ be the natural isomorphism given by the definition of $B_D$ for arbitrary pre-dilators $D$.
For each $n\in\mathbb{N}$, let 
\[\eta_n: \hat B_D(n) \to B_D(n)\] 
be the function given by:
\begin{align*}
m &\mapsto m^\star, &\text{ if $m < n$,}\\
B_{\alpha_k}\dots B_{\alpha_1}(n) &\mapsto \psi(2^{\alpha_1'} + \dots + 2^{\alpha_k'}),\text{ where }\alpha_i'=\theta_{B_D(n)}(\alpha_i), &\text{ otherwise.}
\end{align*}
It is clear that the collection of $\eta_n$ for $n\in\mathbb{N}$ forms a natural isomorphism between $\hat B_D\colon \Nat\to \LO$ to $B_D\upharpoonright \Nat$, the restriction of $B_D$ to the category $\Nat$. 
Since both $\hat B_D$ and $B_D$ are finitary, this natural transformation extends to a natural isomorphism between the extension of $\hat B_D$ to $\LO$ and $B_D$, as desired.
\endproof

\begin{remark} We note, without giving details, that in fact the operation 
\[D\longmapsto B_D\]
 could be extended to a pullback-preserving functor on the category of pre-dilators; in other words, $B$ is a \emph{preptyx} and -- as we will see in the next section -- indeed it is a \emph{ptyx}. Furthermore, if we treat the category of pre-dilators as a category of structures (denotation systems can be regarded as structures), then in fact the definition of $B$ in terms of formal ``ordinal collapses'' will be a $\subseteq$-preserving functor. 
\end{remark}

\begin{remark} The construction of $B_D(A)$ in fact works for all $\subseteq$-preserving finitary functors $D\colon \LO\to \LO$. We decided not to cover the extension to finitary functors in this paper for the following reason: The fact that any dilator is naturally isomorphic to a $\subseteq$-preserving one is a known fact following from Girard's fundamental theorem of dilators. 
There is an unpublished result by the second author that establishes an analog of Girard's theorem for finitary functors on $\LO$ that in particularly implies that any finitary functor on $\LO$ is naturally isomorphic to a $\subseteq$-preserving one. 
This result shall appear in a forthcoming article by the second and third authors. 
\end{remark}

In order to make sense of $B_D$ in the case when $D$ is not $\subseteq$-preserving, it suffices to choose a denotation system $D'$ and put $B_{D}=B_{D'}$.

\section{Proof of the theorem}\label{SectTheorem}
In this section we prove the main theorem. We restate it, for convenience.

\begin{theorem}\label{MainRestated}
The following are equivalent over $\aca$:
\begin{enumerate}
\item \label{MainRestated1} $\Pi^1_1$-$\ca$;
\item \label{MainRestated4}for every weakly finite dilator $D$, $B_D$ is a weakly finite dilator;
\item \label{MainRestated3} for every dilator $D$, $B_D$ is a dilator.
\end{enumerate}
\end{theorem}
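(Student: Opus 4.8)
The plan is to establish the cycle of implications $(\ref{MainRestated1})\Rightarrow(\ref{MainRestated3})\Rightarrow(\ref{MainRestated4})\Rightarrow(\ref{MainRestated1})$, all over $\aca$. Two of these links are comparatively soft, and the weight of the argument falls on the first (well-foundedness preservation) and the last (the reversal). The implication $(\ref{MainRestated3})\Rightarrow(\ref{MainRestated4})$ is the easiest. Let $D$ be a weakly finite dilator; since it is in particular a dilator, $(\ref{MainRestated3})$ gives that $B_D$ is a dilator, so it remains only to check that $B_D$ is weakly finite, i.e. that $B_D(n)$ is finite for each $n\in\mathbb{N}$. This is provable in $\aca$ and does not use $(\ref{MainRestated3})$: by Lemma \ref{EquivalenceOfDefinitions}, $B_D(n)$ is isomorphic to $\hat B_D(n)$, whose domain is the set of natural numbers below the genuine natural number $B_{D(\omega)}(n)$, and weak finiteness of $D$ is exactly what guarantees that the suprema defining this value are finite.

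For the main implication $(\ref{MainRestated1})\Rightarrow(\ref{MainRestated3})$, fix a dilator $D$ and a well-order $A$. By Lemma \ref{LemmaPullbacks}, $B_D$ is already a $\subseteq$-preserving pre-dilator in $\aca$, so it suffices to show, using $\Pi^1_1$-$\ca$, that $B_D(A)$ is well-founded. The key is to read the definition of $B_D(A)$ as a Bachmann--Howard-style collapse: the constants $a^\star$ form a copy of $A$ lying below all collapse terms, while the assignment $t\mapsto\psi(t)$ collapses $2^{D(B_D(A))}=(\exp\circ D)(B_D(A))$ into $B_D(A)$, and clause (3)(d) of the definition is precisely the support (almost-order-preserving) condition characteristic of such collapses. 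Thus $B_D(A)$ is the Bachmann--Howard fixed point of the dilator $\exp\circ D$ over the ground order $A$. I would prove its well-foundedness by the method of distinguished sets: assuming a descending sequence, one uses $\Pi^1_1$-$\ca$ to assemble the hierarchy of collapsing levels closed under $\exp\circ D$ (the abstract analogue of the cardinals $\aleph_n$), embed the collapse order-preservingly into a genuine well-order built from $A$ and these levels, and derive a contradiction. Equivalently, one absorbs the ground order by passing to the dilator $X\mapsto D(A+X)$ and a relativized collapse, and appeals to the forward direction of Freund's theorem, which states that under $\Pi^1_1$-$\ca$ every dilator admits a well-founded Bachmann--Howard collapse.

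The reversal $(\ref{MainRestated4})\Rightarrow(\ref{MainRestated1})$ is where Freund's theorem enters decisively, now in its nontrivial direction: the Bachmann--Howard principle, asserting that every dilator has a well-founded Bachmann--Howard collapse, implies $\Pi^1_1$-$\ca$. Given an instance of $\Pi^1_1$-comprehension to be verified, Freund's method produces a weakly finite dilator $E$ whose associated collapse being well-founded yields that instance. Applying $(\ref{MainRestated4})$ to $E$, we learn that $B_E(A)$ is a weakly finite dilator, and in particular that $B_E(A)$ is well-founded for every well-order $A$. By the reading above, the $\psi$-part of $B_E(A)$ is a well-founded Bachmann--Howard collapse of $\exp\circ E$; since $E$ embeds naturally into $\exp\circ E$ via $x\mapsto 2^x$, restricting the collapse along this natural transformation yields a well-founded Bachmann--Howard collapse of $E$ itself. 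Feeding this into Freund's argument produces the desired comprehension instance, so $\Pi^1_1$-$\ca$ follows; note that only weakly finite encoding dilators are needed, which is exactly what $(\ref{MainRestated4})$ supplies.

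The main obstacle is the well-foundedness preservation in $(\ref{MainRestated1})\Rightarrow(\ref{MainRestated3})$. The difficulty is not the combinatorics of the term system, which is already packaged by the definition of $B_D$ and by Lemma \ref{LemmaPullbacks}, but carrying out the collapsing (distinguished-set) argument within $\Pi^1_1$-$\ca$, and in particular controlling the interaction between the ground order $A$ and the supports of the collapse terms $\alpha_i\in D(B_D(A))$, which is what forces the collapse to be relativized to a parameter order. One must verify that the iteration of collapsing levels needed to witness well-foundedness is genuinely available in $\Pi^1_1$-$\ca$ and that the whole construction is formalizable there; this is precisely the point at which the strength of $\Pi^1_1$-$\ca$ is consumed, matching the fact, established by the reversal, that no weaker subsystem suffices.
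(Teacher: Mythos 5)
Your cycle $(\ref{MainRestated1})\Rightarrow(\ref{MainRestated3})\Rightarrow(\ref{MainRestated4})\Rightarrow(\ref{MainRestated1})$ is a legitimate plan and the step $(\ref{MainRestated3})\Rightarrow(\ref{MainRestated4})$ is fine, but the reversal as you describe it has a genuine gap. You propose to extract a Bachmann--Howard collapse of $E$ from $B_E(A)$ by ``restricting the collapse along $x\mapsto 2^x$,'' i.e.\ by sending $\alpha\in E(B_E(A))$ to $\psi(2^{\alpha})$. But $\psi(2^{\alpha})$ is a term of $B_E(A)$ only when clause (3)(d) of the definition is satisfied with $k=0$, i.e.\ only when $\alpha\in E\bigl(B_E(A)\upharpoonright\psi(0)\bigr)$, which forces $\supp(\alpha)$ to lie inside the copy of the ground order. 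So $\alpha\mapsto\psi(2^{\alpha})$ is only partially defined and is nowhere near a total collapse $\theta\colon E(X)\to X$, which is what Freund's theorem requires. This is exactly the obstruction the paper's Lemma \ref{LemmaBachmannHoward} is engineered to overcome: one replaces $D$ by the auxiliary dilator $F=(\omega+1)D+\omega$, whose terms $\Lambda+n$ give cofinally many anchors $\psi(2^{\Lambda+n})$ in $A=B_F(0)$ below which any finite support eventually fits, and whose terms $(\omega+1)\alpha+m$ let one define $\theta(\alpha)$ as the least term of the form $\psi(2^{\alpha_1}+\cdots+2^{\alpha_n}+2^{(\omega+1)\alpha+\omega})$ and verify the two Bachmann--Howard conditions by approximating $\theta(\beta)$ from below. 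Without some such device your $\theta$ is not total. Relatedly, your claim that Freund's reversal only consumes weakly finite dilators is asserted rather than argued; the paper sidesteps this by proving $(\ref{MainRestated4})\Rightarrow(\ref{MainRestated3})$ outright (Lemma \ref{LemmaWeaklyFinite}, padding an arbitrary denotation system $D$ to a weakly finite $\hat D$ and embedding $B_D(A)$ into $B_{\hat D}(\omega+A)$) and then running the reversal from $(\ref{MainRestated3})$ for arbitrary dilators.

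For $(\ref{MainRestated1})\Rightarrow(\ref{MainRestated3})$ your distinguished-sets sketch is a recognized technique but is left entirely schematic, and your alternative --- quoting the forward direction of Freund's theorem --- only yields that \emph{some} wellfounded Bachmann--Howard fixed point of a relativized dilator exists; to conclude that the particular term order $B_D(A)$ is wellfounded you would still need to embed $B_D(A)$ into that fixed point, a step you do not address. The paper's Lemma \ref{LemmaPi11CA} avoids all of this with a short direct argument: let $\Omega$ be the largest wellordered initial segment of $B_D(A)$ (this is the sole use of $\Pi^1_1$-$\ca$), note that $D(\Omega)$ is wellordered, and show by transfinite induction on $\alpha\in D(\Omega)$ that $\Omega$ is closed under appending $2^{\alpha}$ to collapse terms, whence $\Omega=B_D(A)$. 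You should either carry out the distinguished-sets argument in detail or adopt this induction.
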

%The condition that $F$ be a ptyx, from the statement of Theorem \ref{TheoremMainIntroPtyx} on p. \pageref{TheoremMainIntroPtyx}, is, by definition, synonymous with \eqref{MainRestated3} of Theorem \ref{MainRestated}.
%As before we reason in $\aca$ unless otherwise stated.
\begin{lemma}[$\Pi^1_1$-$\ca$] \label{LemmaPi11CA}
For any dilator $D$ and any wellordering  $A$, $B_D(A)$ is wellordered.
\end{lemma}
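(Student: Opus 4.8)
The plan is to embed $B_D(A)$ order-preservingly into a genuine wellordering and then use the fact (provable in $\aca$) that a linear order admitting a strictly increasing map into a wellorder is itself wellordered. First I would dispose of the ground part: the elements $a^\star$ with $a\in A$ form an initial segment of $B_D(A)$ order-isomorphic to $A$, lying below every $\psi$-term, so it suffices to handle the suborder of $\psi$-terms (any putative infinite descending sequence must eventually remain among $\psi$-terms, else it would induce one in the wellorder $A$).

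Next I would recognize the $\psi$-part of $B_D(A)$ as a Bachmann–Howard-style collapse. Since $2^{(\cdot)}$ and $D$ are dilators, so is their composite $E\colon X\mapsto 2^{D(X)}$, which is formable in $\aca$. The closure conditions defining $B_D(A)$ — that $\psi(0)\in C$ and that $\psi(2^{\alpha_1}+\dots+2^{\alpha_{k+1}})\in C$ whenever the exponents lie in $D(C)$ and the capturing condition $\alpha_{k+1}\in D(C{\upharpoonright}\psi(2^{\alpha_1}+\dots+2^{\alpha_k}))$ holds — are precisely the generation rules of a collapse of $E$ over the parameter $A$, with clause (3)(d) playing the role of the Bachmann–Howard inequality ``support below the value of the previous collapse.'' I would then invoke Freund's theorem \cite{Fr19}: over $\aca$, $\Pi^1_1$-$\ca$ implies that every dilator admits a wellfounded Bachmann–Howard fixed point. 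Applying this to the dilator $X\mapsto A + E(X)$ (the sum of the constant dilator $A$ with $E$, which incorporates the ground copy of $A$), one obtains a wellordering $\Omega$ together with a collapsing function $c\colon E(\Omega)\to\Omega$ satisfying the Bachmann–Howard conditions, with a copy of $A$ as an initial segment.

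With $\Omega$ and $c$ in hand, I would define $h\colon B_D(A)\to\Omega$ by recursion along the stages $B_{D,n}(A)$, setting $h(a^\star)=a$ and $h(\psi(t))=c\bigl(2^{D(h)}(t)\bigr)$, where $D(h)$ and its induced action $2^{D(h)}$ are the functorial images. Proceeding by induction on $n$, one checks that $h$ is well-defined — the capturing condition guarantees that the argument of $c$ lies in the subdomain on which $c$ is ``active'' — and that $h$ is strictly increasing: the comparison of $\psi$-terms is governed by $<_{2^{D(C)}}$ (by the definition of the order on $B_D(A)$), which is matched by the order-preservation of $c$ on captured arguments, while $a^\star<_{B_D(A)}\psi(t)$ is preserved because $A$ is an initial segment lying below all collapse values. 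Hence $h$ embeds $B_D(A)$ into the wellorder $\Omega$, and $B_D(A)$ is wellordered.

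The main obstacle is the verification that $h$ is order-preserving. Two points are delicate. First, one must show that the incremental, one-summand-at-a-time capturing condition (3)(d) is equivalent to the global Bachmann–Howard support condition needed for $c$ to be order-preserving on the relevant arguments; this is checked by induction, using that $2^{(\cdot)}$ compares lexicographically so that the supports of $2^{\alpha_1}+\dots+2^{\alpha_k}$ accumulate correctly as summands are adjoined. Second, the relativization to $A$ must be handled carefully, so that exponents built from the $a^\star$ are collapsed consistently and $A$ genuinely embeds as an initial segment of $\Omega$. Routing the argument through Freund's theorem is precisely what localizes where the full strength of $\Pi^1_1$-$\ca$ (rather than, say, $\atr$) is consumed.
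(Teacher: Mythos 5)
Your argument is correct in outline, but it takes a genuinely different route from the paper. The paper's proof is direct and self-contained: it uses $\Pi^1_1$-$\ca$ only to form the \emph{maximal wellordered initial segment} $\Omega$ of $B_D(A)$, notes that $D(\Omega)$ is then wellordered, and shows by transfinite induction along $D(\Omega)$ (with a side induction on the number of summands) that $\Omega$ is closed under the generation rule for $\psi$-terms, whence $\Omega=B_D(A)$. You instead import Freund's theorem to manufacture an abstract Bachmann--Howard fixed point $(\Omega,\vartheta)$ for a dilator of the form $X\mapsto A+2^{D(X)}$ and embed $B_D(A)$ into it. Both work; what the paper's route buys is economy (Freund's theorem is needed only for the converse direction, Lemma \ref{LemmaBachmannHoward}, so the forward direction costs nothing beyond the elementary ``largest wellordered initial segment'' use of $\Pi^1_1$-$\ca$), while your route buys a pleasing symmetry, making both halves of Theorem \ref{MainRestated} pass through Bachmann--Howard fixed points, at the price of a heavier external input and a more delicate verification.

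Two points in that verification deserve more care than your sketch gives them. First, ``induction on $n$'' over the stages $B_{D,n}(A)$ is not quite the right scaffolding, because $B_{D,n}(A)$ is \emph{not} an initial segment of $B_D(A)$: to show $h(\psi(t))<_\Omega h(\psi(u))$ you must verify $\supp(2^{D(h)}(t))<_\Omega\vartheta(2^{D(h)}(u))$, and the elements of $\supp(t)$ may have entered at an earlier stage than $\psi(u)$, so the needed instance of order-preservation is not covered by ``$h$ is increasing on $B_{D,<n}(A)$.'' The induction should instead be on the stage of the \emph{smaller} element of each pair (the reduction from the pair $(\psi(t),\psi(u))$ to the pairs $(b,\psi(u))$ with $b\in\supp(t)$ strictly decreases that measure), together with your key observation that the incremental clause (3)(d) yields $\supp(t)<_{B_D(A)}\psi(t)$ globally. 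Second, a Bachmann--Howard fixed point for $X\mapsto A+2^{D(X)}$ does not literally contain $A$ as an initial segment of $\Omega$; rather, the collapses $\vartheta(a)$ of the constant terms $a\in A$ form an increasing copy of $A$ lying below every $\vartheta(\beta)$ with $\beta$ in the exponential part (since constants have empty support), so you should set $h(a^\star)=\vartheta(a)$ rather than $h(a^\star)=a$. With these adjustments your proof goes through.
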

\proof
By the definition of $B_D$, it suffices to consider the case that $D$ is $\subseteq$-preserving.

Let $\Omega$ be the largest wellordered initial segment of $B_D(A)$. This exists by \mbox{$\Pi^1_1$-$\ca$}.  By hypothesis, $D$ is a dilator, so $D(\Omega)$ is wellordered. 
By induction on ordinals 
$\alpha \in D(\Omega)$,  we show that for every $\psi(2^{\alpha_1} + \dots + 2^{\alpha_k}) \in \Omega$, if $\psi(2^{\alpha_1} + \dots + 2^{\alpha_k} + 2^\alpha) \in B_D(A)$, then 
\[a := \psi(2^{\alpha_1} + \dots + 2^{\alpha_k} + 2^\alpha) \in \Omega.\]
Suppose that $x <_{B_D(A)} a$. Then, either
\[x \leq \psi(2^{\alpha_1} + \dots + 2^{\alpha_k})\]
or else there are $l\in\mathbb{N}$ and $\beta_1, \hdots, \beta_l$ with $\beta_l <_{D(B_D(A))} \dots <_{D(B_D(A))} \beta_1 < \alpha$ such that
\[x \leq \psi(2^{\alpha_1} + \dots + 2^{\alpha_k} + 2^{\beta_1} + \dots + 2^{\beta_l}).\]
We show by induction on $i < l$ that 
\begin{enumerate}
\item $\beta_i \in D(\Omega)$,  and
\item $\psi(2^{\alpha_1} + \dots + 2^{\alpha_k} + 2^{\beta_1} + \dots + 2^{\beta_i}) \in \Omega$.
\end{enumerate}
By the definition of $B_D$,  we have
\[\beta_{i+1} \in D(B_D(A)\upharpoonright \psi(2^{\alpha_1} + \dots + 2^{\alpha_k} + 2^{\beta_1} + \dots + 2^{\beta_i})).\]
By the induction hypothesis on $i$,  $\psi(2^{\alpha_1} + \dots + 2^{\alpha_k} + 2^{\beta_1} + \dots + 2^{\beta_i}) \in \Omega$ (this is immediate by the assumption on $\alpha_1, \ldots, \alpha_k$ in the case $i =0$) and $\Omega$ is an initial segment of $B_D(A)$,  so the first claim follows.  The second follows from the induction hypothesis on $\alpha$.
We have shown that every element $x <_{B_D(A)} a$ belongs to $\Omega$ and thus
 $a \in \Omega$, as desired.

By a straightforward induction on $k$, it follows that
\[\psi(2^{\alpha_1} + \dots + 2^{\alpha_k}) \in \Omega\]
for all $\psi(2^{\alpha_1} + \dots + 2^{\alpha_k}) \in B_D(A)$, i.e., that $B_D(A) = \Omega$ and thus $B_D(A)$ is wellordered.
\endproof

\begin{lemma}[$\aca$]\label{LemmaWeaklyFinite}
Suppose that for every weakly finite dilator $D$, $B_D$ is a weakly finite dilator. Then, for every dilator $D$, $B_D$ is a dilator.
\end{lemma}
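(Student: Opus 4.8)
The plan is to prove the reduction directly: assuming the weakly finite case, I would show that for an arbitrary dilator $D$ and well-order $A$, the linear order $B_D(A)$ is well-ordered. Since $B_D$ is already a $\subseteq$-preserving pre-dilator by Lemma \ref{LemmaPullbacks}, this well-foundedness preservation is all that is needed for $B_D$ to be a dilator. First I would reduce to the countable case. A putative infinite descending sequence in $B_D(A)$ is a single countable object; unwinding the definition of $B_D$, every term $\psi(2^{\alpha_1}+\dots+2^{\alpha_k})$ occurring in it is built from finitely many $D$-terms $\alpha_i\in D(B_D(A))$ whose supports consist again of such terms or elements $a^\star$ with $a\in A$. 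Iterating, the whole sequence mentions only a countable suborder $A_0\subseteq A$ (still a well-order, being a suborder of one), a countable suborder $C_0\subseteq B_D(A)$ of $\psi$- and $\star$-terms closed under the relevant supports, and a countable set of $D$-terms $T$; by $\subseteq$-preservation and finitariness of $D$ one gets $T\subseteq D(C_0)$, and the relevant fragment of $D$ is countable. Thus it suffices to derive a contradiction from a \emph{countable} dilator $D$, a countable well-order $A$, and an infinite descending sequence in $B_D(A)$.

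The heart of the argument is to manufacture from this data a \emph{weakly finite} dilator, to which the hypothesis can be applied. The obstruction is that a general countable dilator need not be weakly finite: a fixed finite order $n$ may have infinitely many full-support elements in $D(n)$ (already the dilator $X\mapsto\omega\cdot X$ fails weak finiteness at $n=1$). The plan is therefore to \emph{stretch} the presentation: re-index the countable trace of the relevant fragment of $D$ along a longer well-order, assigning to each trace element a sufficiently late ``birth level'' so that only finitely many survive below each finite bound, while preserving all comparisons. This should produce a weakly finite dilator $E$ together with an order embedding $B_D(A)\hookrightarrow B_E(A')$ for a suitable countable well-order $A'$, under which the given descending sequence is carried to an infinite descending sequence in $B_E(A')$. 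Since $E$ is weakly finite, the hypothesis yields that $B_E$ is a (weakly finite) dilator, so $B_E(A')$ is well-ordered, a contradiction. I expect the construction of $E$ and the verification that it is simultaneously (i) weakly finite, (ii) a genuine dilator, and (iii) faithful to the order structure of the descending sequence to be the main obstacle; the well-foundedness of $E$ should be inherited from that of $D$, since on traces the re-indexing is an order isomorphism onto a substructure of a stretch of $D$, and all three conditions ought to be arrangeable and checkable in $\aca$.

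Conceptually, the same step can be organized as an approximation: the relevant fragment of $D$ can be written as a direct limit $\varinjlim_n D_n$ of weakly finite dilators with $B_D(A)=\varinjlim_n B_{D_n}(A)$, each $B_{D_n}(A)$ being well-ordered by the hypothesis. The delicate point in this formulation is that a direct limit of well-orders need not be well-ordered, so one must show that any infinite descending sequence is \emph{localized} to a single stage $B_{D_n}(A)$. Here the support condition in the definition of $B_D$ --- namely that $\alpha_{k+1}\in D\big(B_D(A){\upharpoonright}\psi(2^{\alpha_1}+\dots+2^{\alpha_k})\big)$ --- is what confines the $D$-terms appearing below a given $\psi$-term, and it is precisely this localization that the stretched re-indexing above is designed to make transparent. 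Whichever presentation is adopted, the crux remains the passage from a general to a weakly finite dilator while preserving the witness to ill-foundedness.
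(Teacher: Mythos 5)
Your overall strategy is the one the paper uses: the entire content of the lemma is a single ``stretching'' construction that turns an arbitrary denotation system $D$ into a weakly finite dilator $E$ together with an order embedding of $B_D(A)$ into $B_E(A')$ for a slightly enlarged well-order $A'$, after which the hypothesis applied to $E$ finishes the argument. But your proposal stops exactly where the proof begins: the construction of $E$ is only postulated (``ought to be arrangeable and checkable''), and the one mechanism that makes a functorial ``birth level'' possible is never named. In a denotation system the only way to delay a term's appearance without breaking naturality is to \emph{increase its arity}: the paper enumerates the terms $d_0,d_1,\dots$ of $D$ (with arities $n_i$) and replaces $d_i$ by a term $\hat d_i$ of arity $n_i+i$, compared by first comparing the underlying $d_i$-parts in $D$ and breaking ties lexicographically in the $i$ extra arguments. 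Then a finite order $A$ only supports terms $\hat d_i$ with $i\le|A|$, so $\hat D$ is weakly finite, and the embedding $B_D(A)\to B_{\hat D}(\omega+A)$ is built level-by-level by sending $d_j(\vec a)$ to $\hat d_j(e(\vec a),(j-1)^\star,\dots,0^\star)$, the prepended copy of $\omega$ supplying the extra parameters; one must then check that the support side-condition in the definition of $B_{\hat D}$ is preserved, which works because the elements $m^\star$ lie below every $\psi$-term. Without some such concrete device your plan is a restatement of the difficulty rather than a proof.

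One step in your sketch would fail as described: you claim the well-foundedness of $E$ ``should be inherited from that of $D$, since on traces the re-indexing is an order isomorphism onto a substructure of a stretch of $D$.'' It is not. The lexicographic tie-breaking genuinely refines the order: each element of $D(A)$ is replaced by a block ordered like a set of decreasing tuples from $A$, so $\hat D(A)$ does not embed into $D(A')$ for any stretched $A'$. The paper instead embeds $\hat D(A)$ into $2^{A}\cdot D(A)$, which is well-ordered when $A$ is; some such product argument is needed, not mere inheritance. Two smaller remarks: the reduction to a countable fragment is harmless but vacuous in $\aca$, where $D$ is already a coded (hence countable) object; and your alternative direct-limit formulation $D=\varinjlim_n D_n$ runs into precisely the localization problem you yourself identify, which the single global construction $\hat D$ avoids entirely.
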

\proof
If $D$ and $D'$ are naturally isomorphic pre-dilators, then so too are $B_D$ and $B_{D'}$. Hence, it suffices to prove the theorem for denotation systems $D$. 
Let $D$ be a denotation system and enumerate all ordinal terms of $D$ by $d_0, d_1, \hdots$; write $n_i$ for the arity of $d_i$. We define a new dilator $\hat D$ consisting of terms $\hat d_i$ of arity $n_i+i$. Clearly, $\hat D$ will be weakly finite. The comparison rules for $
\hat D$ are given by
\[\hat d_i (x_1, \hdots, x_{n_i}, \hat x_1, \hdots, \hat x_i) <_{\hat D} \hat d_j (y_1, \hdots, y_{n_j}, \hat y_1, \hdots, \hat y_j)\]
if and only if one of the following holds:
\begin{enumerate}
\item $d_i(x_1,\hdots, x_{n_i}) <_{D} d_j(y_1,\hdots, y_{n_j})$; or else
\item $i = j, x_1 = y_1, \hdots, x_{n_i} = y_{n_i}$, and $(\hat x_1, \hdots, \hat x_i) <_{lex} (\hat y_1,\hdots, \hat y_j)$.
\end{enumerate}
It is easy to check that $<_{\hat D}$ is a linear order. And it is easy to see that for any $A$, we have an embedding of $\hat D (A)$ into $2^{A}\cdot D(A)$: 
$$\hat d_i(a_1,\ldots,a_{n_i}, b_1,\ldots,b_i)\longmapsto 2^A \cdot d_i(a_1,\ldots,a_{n_i}) + 2^{b_1}+\ldots+2^{b_i}.$$ Thus $\hat D$ is a dilator. Observe that for each finite order $A$, the order $\hat D(A)$ contains only terms of the form $\hat d_i(\ldots)$, where $i\le |A|$ and hence $\hat D(A)$ is finite. Therefore $\hat D$ is a weakly finite dilator. Thus $B_{\hat D}$ is a dilator.

In order to show that $B_D$ preserves wellfoundedness it is enough to find a strictly increasing map 
\[e\colon B_D(A)\to B_{\hat D}(\omega+A)\] 
for an arbitrary wellorder $A$. We fix a wellorder $A$ and define, by recursion on $n$, 
\[e_n: B_{D,n}(A) \to B_{\hat D,n}(\omega+A)\]
\begin{enumerate}
\item $e_n(a^\star)=(\omega+a)^\star$;
\item $e_n(\psi(2^{\alpha_1}+\ldots +2^{\alpha_k}))=\psi(2^{\alpha_1'}+\ldots +2^{\alpha_k'})$, where if $\alpha_i=d_j(a_{1},\ldots,a_{n_{j}})$, then $\alpha_i'=\hat d_{j}(e_{n-1}(a_{1}),\ldots,e_{n-1}(a_{n_j}), (j-1)^\star, \ldots, 0^\star)$.\end{enumerate}
By a straightforward induction on $n$ we show that $e_n$'s form a sequence of expanding strictly increasing maps. We finish the proof by defining $e=\bigcup\limits_{n<\omega} e_n$.
%\begin{enumerate}
%\item for $\alpha\in A$, we let $e\alpha = \alpha$;
%\item $e\psi(0) = \psi(0)$;
%\item $e\psi(2^{\alpha_1} + \dots + 2^{\alpha_k} + 2^{d_i(x_1, \hdots, x_{n_i})}) = \psi(2^{e\alpha_1} + \dots 2^{e\alpha_k} + 2^{\hat d_i(0, 1, \hdots, i-1, ex_1, \hdots, ex_{n_i})})$.
%\end{enumerate}
%It suffices to show that $e$ is order-preserving. Suppose we are given 
%\[\alpha = \psi(2^{\alpha_1} + \dots + 2^{\alpha_k}) <_{B_D(A)} \psi(2^{\beta_1} + \dots + 2^{\beta_l}) = \beta.\]
%By induction on $k$ we show that  $e\alpha<e\beta$. We can reduce to the case that $k = l$ and $\alpha_i = \beta_i$ for all $i < k$.
%Then, choosing $d_i$ and the constants $x_1, \hdots, x_{n_i}$ and $y_1, \hdots, y_{n_i}$ appropriately, we have 
%\begin{align*}
%e\psi(2^{\alpha_1} + \dots + 2^{\alpha_k}) 
%&= \psi(2^{e\alpha_1} + \dots + 2^{e\alpha_k})
%\end{align*}
%Writing 
%\begin{align*}
%\alpha_k &= d_i(x_1, \hdots, x_{n_i})\\
%\beta_k &= d_j(y_1, \hdots, y_{n_j}),
%\end{align*} 
%we have
%\begin{align*}
%e\psi(2^{\alpha_1} + \dots + 2^{\alpha_k}) 
%&= \psi(2^{e\alpha_1} + \dots + 2^{\hat d_i(0,1, \hdots, i-1, ex_1, \hdots, ex_{n_i})})\\
%&<\psi(2^{e\alpha_1} + \dots + 2^{\hat d_j(0,1, \hdots, j-1, ey_1, \hdots, ey_{n_j})})\\
%&= e\beta.
%\end{align*}
%Here, the crucial inequality follows by induction hypothesis.
%Since each $B_D(A)$ embeds into a wellorder, $B_D(A)$ is wellfounded for every wellorder $A$, as desired.
\endproof

In order to complete the proof, we have to show that if $B_D$ is a dilator for every dilator $D$, then $\Pi^1_1$-$\ca$ holds. This will be done by appealing to a theorem of Freund \cite{Fr19} whereby $\Pi^1_1$-$\ca$ is equivalent to a higher order fixed-point principle. 
Using the terminology of Freund \cite{Fr19}, this is the statement that every dilator $D$ has a wellfounded Bachmann-Howard fixed point (we recall this definition during the course of the forthcoming proof).

\begin{lemma}[$\aca$]\label{LemmaBachmannHoward}
Suppose that $B_D$ is a dilator for every dilator $D$. Then, every dilator has a wellfounded Bachmann-Howard fixed point.
\end{lemma}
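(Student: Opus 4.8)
The plan is to exhibit, for an arbitrary dilator $D$, a wellfounded Bachmann--Howard fixed point and then appeal to Freund \cite{Fr19}. Recall that a \emph{Bachmann--Howard fixed point} of a dilator $T$ is a linear order $X$ together with a total map $\vartheta\colon T(X)\to X$ such that (BH1) whenever $\sigma<_{T(X)}\tau$ and $\supp(\sigma)<\vartheta(\tau)$ one has $\vartheta(\sigma)<_X\vartheta(\tau)$, and (BH2) $\supp(\sigma)<\vartheta(\sigma)$ for every $\sigma\in T(X)$; it is \emph{wellfounded} if $X$ is a wellorder. The key observation is that the collapse $\psi$ appearing in the definition of $B_D$ is precisely a Bachmann--Howard collapse for the dilator $2^D$ (the composition $A\mapsto 2^{D(A)}$, which is again a dilator): clause~(6) of the definition makes $\psi$ strictly monotone in its argument, which is stronger than (BH1), while the incremental support condition~(3)(d) forces the support of every summand strictly below the value of the corresponding collapse, which yields (BH2).

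Concretely, I would fix a wellorder $A$ (any nonempty one, say $A=\omega$, will do) and set $X=B_D(A)$. By hypothesis $B_D$ is a dilator, so, $A$ being a wellorder, $X$ is a wellorder. First I would promote the partial collapse $\psi$ to a \emph{total} map $\vartheta\colon 2^{D(X)}\to X$ and check that $(X,\vartheta)$ is a Bachmann--Howard fixed point of $2^D$. Then, using the natural, support- and order-preserving embedding $\iota\colon D\Rightarrow 2^D$ given by $\alpha\mapsto 2^\alpha$, I would restrict $\vartheta$ to a map $\vartheta'\colon D(X)\to X$ defined by $\vartheta'(\alpha)=\vartheta(2^\alpha)$. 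Since $\supp(2^\alpha)=\supp(\alpha)$ and $\alpha<_{D(X)}\beta$ holds iff $2^\alpha<_{2^{D(X)}}2^\beta$, conditions (BH1) and (BH2) transfer verbatim from $\vartheta$ to $\vartheta'$, so $(X,\vartheta')$ is a Bachmann--Howard fixed point of $D$; as $X$ is a wellorder it is wellfounded. Because $D$ was arbitrary, every dilator then has a wellfounded Bachmann--Howard fixed point, which is what we want.

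The main obstacle is the construction and verification of the total collapse $\vartheta$. The term $\psi(s)$ belongs to $X$ only for \emph{collapsible} $s=2^{\alpha_1}+\dots+2^{\alpha_k}$, i.e.\ those whose summands satisfy the nested conditions (3)(c)--(3)(d); for a general $s\in 2^{D(X)}$ --- in particular one whose support contains collapse terms not yet bounded below the relevant initial segment --- no clause of the definition produces $\psi(s)$ directly. I would therefore define $\vartheta(s)$ by a normalization recursion that repeatedly reflects the oversized support elements of $s$ downward (replacing a summand whose support is not below the current collapse value by the collapse it denotes, and re-sorting into Cantor normal form), a process that terminates because $X$ is wellfounded. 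The delicate point is then to verify (BH1) for this extension: that the normalized collapses respect $<_X$ whenever the support of the smaller term sits below the target value. This is exactly the standard analysis of Bachmann--Howard normal forms, carried out here in the functorial setting and justified by transfinite induction along the wellorder $X$ (available by hypothesis); (BH2) is comparatively immediate, being built into the support clause (3)(d). An alternative I would keep in reserve is to avoid re-deriving the Bachmann--Howard conditions altogether by constructing an order isomorphism between $X=B_D(A)$ and Freund's canonical Bachmann--Howard notation system for $2^D$, so that wellfoundedness of $X$ transfers directly.
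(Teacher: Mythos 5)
Your high-level framing is right---reduce to Freund's fixed-point principle and use the collapsing structure of $B$ to build the carrier order---but there is a genuine gap at exactly the step you flag as ``the main obstacle,'' and the missing idea is not a normalization procedure but a modification of the \emph{input} dilator. The paper does not apply $B$ to $D$ (or to $2^D$) and then totalize $\psi$ after the fact; it applies $B$ to the auxiliary dilator $F=(\omega+1)D+\omega$, sets $A=B_F(0)$, and defines $\theta(\alpha)$ as the least term of the form $\psi(2^{\alpha_1}+\dots+2^{\alpha_n}+2^{(\omega+1)\alpha+\omega})$. Both modifications do real work for which your proposal has no substitute. The ``$+\omega$'' block of terms $\Lambda+n$ makes the collapsible points $\psi(2^{\Lambda+n})$ cofinal in $A$ and willing to accept any exponent $(\omega+1)\alpha+\omega$ below them, which gives totality of $\theta$. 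The ``$(\omega+1)\cdot{}$'' factor gives, below each $\theta(\beta)$, a cofinal family of collapsible points $\psi(\dots+2^{(\omega+1)\beta+m})$ whose last exponents all dominate $(\omega+1)\alpha+\omega$ for every $\alpha<\beta$; this is precisely what makes (BH1) verifiable. In $X=B_D(A)$ this density of suitable collapsible points fails. Concretely, take $D$ with one constant $c$ and one unary term $t(x)<_D c$ (so $D(Y)\cong Y+1$) and $X=B_D(\omega)$: every collapsible term strictly between $\psi(0)$ and $\psi(2^c)$ has all of its exponents of the form $t(m^\star)$, so for $\alpha=t(\psi(0))$---which satisfies $\alpha<_{D(X)}c$ and $\supp(\alpha)=\{\psi(0)\}<_X\psi(2^c)$---no term of the form $\psi(\dots+2^{\alpha})$ exists below $\psi(2^c)$ at all (the least such term lies above $\psi(2^c)$). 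Whatever your normalization assigns to $t(\psi(0))$ must land strictly between $\psi(0)$ and $\psi(2^c)$, in a region whose terms do not mention $t(\psi(0))$; checking (BH1) for such ad hoc assignments, coherently across all of $D(X)$, is exactly the content of the lemma and is left unproved.

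Two further points. The normalization recursion is not well defined as stated: ``replacing a summand whose support is not below the current collapse value by the collapse it denotes'' substitutes an element of $X$ for an element of $D(X)$ (a type mismatch) and refers to the function $\vartheta$ being defined. And the reserve plan---an order isomorphism between $B_D(A)$ and Freund's notation system for $2^D$---is not available either, since $B_D(A)$ contains only the ``legal'' collapses while Freund's system is generated by a total one; establishing such a comparison is not easier than the lemma itself. The restriction step via $\alpha\mapsto 2^\alpha$ at the end is fine, but it is moot. The repair is the paper's: pass from $D$ to $(\omega+1)D+\omega$ before applying $B$, so that the partial collapse $\psi$, restricted to exponents of the form $2^{(\omega+1)\alpha+\omega}$, is already total and (BH1)-monotone with no post hoc totalization needed.
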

\proof
It suffices to restrict to the case where $D$ is a denotation system and, in particular, $\subseteq$-preserving.
We need to find a wellordering $A$ and a function 
\[\theta:D(A)\to A\]
such that the following hold:
\begin{enumerate}
\item \label{PropBHF1} whenever we have $\alpha<_{D(A)}\beta$ and $\supp(\alpha)<_A \theta(\beta)$, then we have \mbox{$\theta(\alpha) <_A \theta(\beta)$}; and
\item \label{PropBHF2}  $\supp(\alpha) <_A \theta(\alpha)$ for every $\alpha \in D(A)$.
\end{enumerate}

Consider the dilator $F=(\omega+1)D +\omega$. This is defined as follows: Given a linear order $B$, the order $F(B)$ consists of terms of two types:
\begin{enumerate}
\item $(\omega+1)\alpha+a$, where $\alpha\in D(B)$ and $a\le \omega$;
\item $\Lambda+n$, where $n<\omega$.
\end{enumerate}
The terms are compared according to the following rules:
\begin{enumerate}
\item terms of the first type are always smaller than the terms of the second type;
\item $(\omega+1)\alpha+a$ is smaller than $(\omega+1)\beta+b$ if and only if either $\alpha<\beta$ or $\alpha=\beta$ and $a<b$;
\item $\Lambda+n$ is smaller than $\Lambda+m$ if and only if $n<m$.
\end{enumerate}
For a strictly increasing function $f\colon B\to C$ we put 
\begin{align*}
F(f):F(B) &\to F(C)\\ 
(\omega+1)\alpha+a &\mapsto (\omega+1)(D(f)(\alpha))+a;\\
\Lambda + n &\mapsto \Lambda + n.
\end{align*}
It is easy to see that $F$ is indeed a dilator.

We put $A=B_{F}(0)$. Note that all elements of $A$ are of the form $\psi(2^{\alpha_1}+\ldots+2^{\alpha_n})$, where $\alpha_i\in F(A)$. We define 
\[\theta\colon D(A)\to A\] 
to be the function that maps $\alpha\in D(A)$ to the least element of $A$ of the form 
\[\psi(2^{\alpha_1}+\ldots+2^{\alpha_n}+2^{(\omega+1)\alpha+\omega}).\] 
This function is well defined, i.e., for any $\alpha\in D(A)$ there is some element of $A$ as above.
Indeed, the elements of the form $\psi(2^{\Lambda+n})$ are cofinal in $A$. 
Thus, for any $\alpha\in D(A)$ one can find $n$ large enough so that $\alpha\in D(A{\upharpoonright}\psi(2^{\Lambda+n}))$  and hence 
\[\psi(2^{\Lambda+n}+2^{(\omega+1)\alpha+\omega})\in A.\]

Let us verify that $\theta$ satisfies property \eqref{PropBHF1}: we suppose $\alpha<_{D(A)}\beta$ and $\supp(\alpha)<_A \theta(\beta)$ and claim that $\theta(\alpha) <_A \theta(\beta)$. Let $\beta_1$, $\hdots$, $\beta_n$ be such that
\[\theta(\beta) = \psi(2^{\beta_1}+\ldots+2^{\beta_n}+2^{(\omega+1)\beta+\omega}).\] 
Observe that all elements of the form 
\[\psi(2^{\beta_1}+\ldots+2^{\beta_n}+2^{(\omega+1)\beta+m})\] 
are in $A$ and are cofinal below $\theta(\beta)$. It follows that, since 
\[\supp(\alpha) = \supp((\omega+1)\alpha+\omega)\]
is a finite set, we can find $m\in\mathbb{N}$ large enough so that 
\[\supp(\alpha)<_A\psi(2^{\beta_1}+\ldots+2^{\beta_n}+2^{(\omega+1)\beta+m}).\] 
For such an $m$, $\psi(2^{\beta_1}+\ldots+2^{\beta_n}+2^{(\omega+1)\beta+m}+2^{(\omega+1)\alpha+\omega})$ is in $A$. Therefore
\begin{align*}
\theta(\alpha)
&\le \psi(2^{\beta_1}+\ldots+2^{\beta_n}+2^{(\omega+1)\beta+m}+2^{(\omega+1)\alpha+\omega})\\
&<\psi(2^{\beta_1}+\ldots+2^{\beta_n}+2^{(\omega+1)\beta+\omega})\\
&=\theta(\beta).
\end{align*}

The fact that $\theta$ satisfies \eqref{PropBHF2} is immediate from the construction. Indeed, for any $\alpha\in D(A)$, the value $\theta(\alpha)$ is of the form $\psi(2^{\alpha_1}+\ldots+2^{\alpha_n}+2^{(\omega+1)\alpha+\omega})\in A$ and we have $$\supp(\alpha)=\supp((\omega+1)\alpha+\omega)<_A\psi(2^{\alpha_1}+\ldots+2^{\alpha_n})<_A\theta(\alpha),$$
as desired.
\endproof

Putting together the lemmata in this section, the proof of Theorem \ref{MainRestated} is now complete. 

\begin{remark}
By appealing to the main theorem of Freund \cite{Fr20} and carefully formalizing the definition of $B_D$ in Section \ref{SectCollapse}, the equivalence in Theorem \ref{MainRestated} can be proved in $\rca$.
\end{remark}

\bibliographystyle{abbrv}
\bibliography{References}

\end{document}